\numberwithin{equation}{section}
\newtheorem{theorem}{Theorem}[section]
\newtheorem{proposition}[theorem]{Proposition}
\newtheorem{lemma}[theorem]{Lemma}
\theoremstyle{remark}
\newtheorem{remark}{Remark}[section]
\theoremstyle{definition}
\newtheorem{definition}[theorem]{Definition}
\DeclareMathOperator{\supp}{supp}
\newcommand{\parmag}{\mathcal{D}_{A,m}}
\newcommand{\R}{{\mathbb R}}
\newcommand{\hank}{\mathcal{H}_\nu}
\newcommand{\Z}{{\mathbb Z}}
\newcommand{\C}{{\mathbb C}}
\newcommand{\solschr}{e^{it{\sqrt{H_A}}}u_0}
\def\XXint#1#2#3{{\setbox0=\hbox{$#1{#2#3}{\int}$ }
\vcenter{\hbox{$#2#3$ }}\kern-.58\wd0}}
\begin{document}
\title
[Wave and Dirac in Aharonov-Bohm magnetic fields]
{Generalized Strichartz estimates for wave and Dirac equations in Aharonov-Bohm magnetic fields}
%
%




\bibliographystyle{plain}


\begin{abstract}
We prove generalized Strichartz estimates for wave and massless Dirac equations in Aharonov-Bohm magnetic fields. Following a well established strategy to deal with scaling critical perturbations of dispersive PDEs, we make use of Hankel transform and rely on some precise estimates on Bessel functions. As a complementary result, we prove a local smoothing estimate for the Klein-Gordon equation in the same magnetic field.
\end{abstract}


\author{Federico Cacciafesta}
\address{Dipartimento di Matematica, Universit\'a degli studi di Padova, Via Trieste, 63, 35131 Padova PD, Italy}
\email{cacciafe@math.unipd.it}

\author{Zhiqing Yin}
\address{Department of Mathematics, Beijing Institute of Technology, Beijing 100081;}
\email{yinzhiqing0714@icloud.com}

\author{Junyong Zhang}
\address{Department of Mathematics, Beijing Institute of Technology, Beijing 100081; Department of Mathematics, Cardiff University, UK}
\email{zhang\_junyong@bit.edu.cn; ZhangJ107@cardiff.ac.uk}

\maketitle


\section{Introduction}

In the last years a lot of effort has been devoted to the study of dispersive properties of PDEs perturbed by {\em scaling critical} potentials. These potentials turn to be particularly hard to be dealt with, as indeed the fact that they show the same homogeneity of the differential operator prevent the use of perturbative techniques, and force to build some ``ad hoc" strategy. The most celebrated examples for what concerns Schr\"odinger and wave equations are represented by the {\em inverse square potential}, i.e. a potential of the form
$$
V(x)=\frac{a}{|x|^2}
$$
so that the Hamiltonian becomes
\begin{equation}\label{haminvsq}
H_V=-\Delta+\frac{a}{|x|^2}
\end{equation}
for some ``not too negative" constant $a$, and the {\em Aharonov-Bohm magnetic field}, that is
\begin{equation}\label{AB}
A_B:\R^2\setminus\{(0,0)\}\to\R^2,
\quad
A_B(x)=\alpha\left(-\frac{x_2}{|x|^2},\frac{x_1}{|x|^2}\right),
\quad
\alpha\in\R,
\quad
x=(x_1,x_2)
\end{equation}
so that the Hamiltonian becomes
\begin{equation}\label{eq:H}
H_A=\left(-i\nabla+\alpha\left(-\frac{x_2}{|x|^2},\frac{x_1}{|x|^2}\right)\right)^2.
\end{equation}
We refer to \cite{fanfel} and references therein for an overview of the spectral theory of this Hamiltonian in Aharonov-Bohm magnetic fields which, we point out, is a 2-dimensional model.
We do not intend to provide a detailed picture of the literature here, but we wish to mention at least \cite{burq1}-\cite{burq2}-\cite{miao}, for what concerns Strichartz estimates for both Schr\"odinger and wave equations associated to Hamiltonian \eqref{haminvsq} and \cite{fanfel} for what concerns time-decay and Strichartz estimates for the Schr\"odinger equation associated to \eqref{eq:H}. To the very best of our knowledge, no Strichartz estimates are available for the wave flow in this latter framework, except for \cite{cacfan2}, in which some local smoothing (and weighted Strichartz) are obtained.
For what concerns the Dirac equation the situation is much harder, due to the rich algebraic structure of the Dirac equation, and the only available results in this direction are provided in \cite{cacser} and \cite{cacfan}, in which local smoothing estimates are proved in the cases of, respectively, the Coulomb potential perturbation and the Aharonov-Bohm magnetic field.\vspace{0.2cm}

The purpose of this paper is to somehow combine the strategies of \cite{GZZ, miao, miao1} with the ideas of \cite{cacfan,cacfan2} to prove some generalized Strichartz estimates for the wave and massless Dirac equation in Aharonov-Bohm magnetic field. 

The wave equation we intend to deal with is thus the following
\begin{equation}\label{eq:wave}
\begin{cases}
\partial_t^2v+H_Av=0
\\
v(0,\cdot)=v_0(\cdot),
\\
\partial_tv(t,\cdot)=v_1(\cdot),
\end{cases}
\end{equation}
the solution of which is given by the formula
$$
v(t,\cdot)=\cos\left(t\sqrt{H_A}\right)v_0(\cdot)+\frac{\sin\left(t\sqrt{H_A}\right)}{\sqrt{H_A}}v_1(\cdot)
=\Re\left(e^{it\sqrt{H_A}}\right)v_0(\cdot)+\frac{\Im\left(e^{it\sqrt{H_A}}\right)}{\sqrt{H_A}}v_1(\cdot).
$$

The (massless) Dirac Hamiltonian in the Aharonov-Bohm magnetic field (in the units with $h=c=1$) is
%
\begin{equation}\label{dirham}
\mathcal{D}_A=\sigma_1(p_1+A^1)+\sigma_2(p_2+A^2)
\end{equation}
where $p_j=i\partial_j$, $\sigma_j$ are the standard Pauli matrices
\begin{equation}
\sigma_1=\left(\begin{array}{cc}0 & 1 \\1 & 0\end{array}\right),\quad
\sigma_2=\left(\begin{array}{cc}0 &-i \\i & 0\end{array}\right),\quad
\sigma_3=\left(\begin{array}{cc}1 & 0\\0 & -1\end{array}\right)
\end{equation}
and the magnetic potential $A_B(x)=(A^1(x),A^2(x))$ is given by \eqref{AB}. 
We recall that the Pauli matrices satisfy the following relations of anticommutations
\begin{equation*}
\sigma_j\sigma_k+\sigma_k\sigma_j=2\delta_{ik}\mathbb{I}_2,\quad j,k=1,2.
\end{equation*}
The Cauchy problem associated to the Hamiltonian \eqref{dirham} takes the form
\begin{equation}\label{eq:dirac}
\begin{cases}
\displaystyle
 i\partial_tu=\mathcal{D}_Au,\quad u(t,x):\mathbb{R}_t\times\mathbb{R}_x^2\rightarrow\mathbb{C}^{2}\\
u(0,x)=u_0(x).
\end{cases}
\end{equation}
We refer to \cite{cacfan} and references therein for further details on the model.\vspace{0.2cm}

Before stating our main results, let us introduce some useful notations:
\medskip

{\bf Notations.} We will denote in a standard way Lebesgue and Sobolev spaces, and with $L^p_tL^q_x=L^p(\mathbb{R}_t; L^q(\mathbb{R}^2_x))$ the mixed space-time Strichartz spaces (we will omit the dimension on the target space). With $L^p_{rdr}$ we will denote the radial part of the $L^p$ norm, that is $\|f\|_{L^p_{rdr}}^p=\int_0^\infty|f(r)|^prdr$; in the case $p=\infty$, we shall simply use the notation $L^\infty_{dr}$
. We use $[\phi_m]$, $m\in\Z$ to denote the space spanned by the vectors $\phi_m$.

Using the polar coordinates $x=r\theta$, $r\geq0$, $\theta\in\mathbb S^1$, and given a measurable function $F=F(t,x):\R\times\R^2\to\C$, we denote by
$$
\|F\|_{L^p_tL^q_{rdr}L^2_\theta}
:=
\left(\int_{-\infty}^{+\infty}\left(\int_{0}^{+\infty}\left(\int_{\mathbb S^1}|F(t,r,\theta)|^2\,d\sigma\right)^{q/2}\,rdr\right)^{p/q}\,dt\right)^{1/p},
$$
being $d\sigma$ the surface measure on the sphere. In particular, when $q=\infty$
$$
\|F\|_{L^p_tL^\infty_{dr}\,L^2_\theta}
:=
\left(\int_{-\infty}^{+\infty}\left(\sup_{r\in[0,\infty)}\left(\int_{\mathbb S^1}|F(t,r,\theta)|^2\,d\sigma\right)^{1/2}\right)^{p}\,dt\right)^{1/p}.
$$

We will denote with $\dot{H}^s_A$ the homogeneous Sobolev spaces induced by Hamiltonian \eqref{eq:H}, that is the space with the norm $\|f\|_{\dot{H}^s_A}=\| H_A^{\frac{s}2}f\|_{L^2}$. We refer again to \cite{fanfel} Section 2 for an overview of this norm.

In what follows we will systematically omit to differentiate between functions and spinors, as the meaning of each object will be clear from the contest.

\medskip 

We are now ready to state the main result of this paper.

\begin{theorem}\label{teo1}
Let $(p,q)\in [2,\infty]^2$ be such that 
\begin{equation}\label{pqrange}
\frac1p+\frac1q<\frac12, \quad \text{or}\quad (p,q)=(\infty, 2).
\end{equation}
Assume $\alpha\in\R$ such that
\begin{equation}\label{sma-eig}
\varepsilon={\rm dist}(\alpha,\Z)=\min_{m\in\Z}|m+\alpha|>0.
\end{equation}
For any $u_0, f\in  \dot H_A^s$ and $p>2$, the following Strichartz estimates hold
\begin{equation}\label{stri-w}
\|e^{it\sqrt{H_A}}u_0\|_{L^p_tL^q_{rdr}L^2_\theta}\leq
C
\|u_0\|_{\dot{H}^{s}_A}
\end{equation}
and
\begin{equation}\label{stri-D}
\|e^{it\mathcal{D}_A}f\|_{L^p_tL^q_{rdr}L^2_\theta}\leq
C
\|f\|_{\dot{H}^{s}_A}
\end{equation}
provided that $s=1-\frac1p-\frac2q$. 

\end{theorem}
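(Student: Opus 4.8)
The plan is to reduce both flows to a spectrally localized estimate via the Hankel transform adapted to $H_A$, exploiting the decomposition of $L^2(\R^2)$ into angular sectors $[\phi_m]$, $m\in\Z$, on each of which $H_A$ acts as a Bessel-type operator of order $\nu=\nu_m:=|m+\alpha|$. First I would recall that, writing $f=\sum_m f_m(r)\phi_m(\theta)$, the operator $H_A$ restricted to the $m$-th sector is unitarily equivalent to the Bessel operator $-\partial_r^2-\tfrac1r\partial_r+\tfrac{\nu_m^2}{r^2}$, which is diagonalized by the Hankel transform $\hank$. The smallness assumption \eqref{sma-eig}, namely $\varepsilon=\mathrm{dist}(\alpha,\Z)>0$, guarantees $\nu_m\ge\varepsilon$ for every $m$, which is exactly what is needed to control the Bessel functions $J_{\nu_m}$ uniformly in $m$ near the origin and to run the argument with constants independent of the angular mode.

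Next I would establish a single frequency-localized estimate of the form
\begin{equation*}
\|e^{it\sqrt{H_A}}P_j u_0\|_{L^p_tL^q_{rdr}L^2_\theta}\le C\,2^{j s}\|P_j u_0\|_{L^2},
\end{equation*}
where $P_j$ is a Littlewood–Paley projector built from the functional calculus of $H_A$, and $s=1-\tfrac1p-\tfrac2q$. By scaling it suffices to treat $j=0$. On each angular sector the half-wave propagator has kernel expressible through an oscillatory integral involving $J_{\nu_m}(r\rho)J_{\nu_m}(r'\rho)e^{it\rho}$; the key input is the precise pointwise and oscillatory bounds on Bessel functions (the standard large-argument asymptotics $J_\nu(x)\sim x^{-1/2}$, the small-argument bound $J_\nu(x)\lesssim x^\nu$, and stationary-phase control) that let one sum the $L^2_\theta$ pieces using orthogonality of the $\phi_m$ and the lower bound $\nu_m\ge\varepsilon$. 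The condition $\tfrac1p+\tfrac1q<\tfrac12$ is what makes the resulting radial integral $\int_0^\infty(\cdots)^{q/2}r\,dr$ and the time integral converge; the endpoint $(\infty,2)$ is handled separately and more directly, since there $L^\infty_{dr}L^2_\theta$ only requires an $L^\infty$ bound in $r$ after using angular orthogonality and energy conservation. Summing the dyadic pieces with a Littlewood–Paley / Minkowski-in-$L^p_t$ argument (legitimate because $p>2$, so $\ell^2\hookrightarrow L^p_t$ after Minkowski in the $r,\theta$ variables) then yields \eqref{stri-w}.

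For the massless Dirac equation I would use the by-now standard trick that the Dirac propagator squares to the wave propagator: since $\mathcal D_A^2=H_A\otimes\mathbb I_2$ (a consequence of the Pauli anticommutation relations together with the specific structure of the Aharonov–Bohm potential, whose magnetic field is a distributional multiple of $\delta_0$), one has $e^{it\mathcal D_A}=\cos(t\sqrt{H_A})\,\mathbb I_2+i\,\mathcal D_A\,\tfrac{\sin(t\sqrt{H_A})}{\sqrt{H_A}}$. Applying \eqref{stri-w} to each term — and noting that $\mathcal D_A H_A^{-1/2}$ is bounded on $L^2$ and commutes with the spectral projectors, so it costs nothing in the $\dot H_A^s$ scale — gives \eqref{stri-D} with the same $s$. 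One subtlety is that $\mathcal D_A$ mixes adjacent angular sectors $\phi_m\leftrightarrow\phi_{m+1}$, so I would check that the decomposition used for the wave estimate is compatible with this mixing; because the bound $\nu_m\ge\varepsilon$ is uniform in $m$, passing between $\nu_m$ and $\nu_{m+1}$ is harmless.

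The main obstacle I expect is the uniform-in-$m$ control of the Bessel oscillatory integrals in the regime $r\rho\sim 1$, i.e. the transition zone between the small- and large-argument behaviour of $J_{\nu_m}$, where one needs bounds that degrade at most polynomially in $\nu_m$ and are then summed against the angular decay coming from the $L^2_\theta$ orthogonality. This is precisely the technical heart where the ``precise estimates on Bessel functions'' advertised in the abstract must be invoked, and where the restriction $\tfrac1p+\tfrac1q<\tfrac12$ (rather than the full wave-admissible range) enters: it provides the margin needed for the relevant integrals to converge without the endpoint refinements that would require much sharper Bessel asymptotics.
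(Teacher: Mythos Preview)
Your outline for the wave flow is essentially the paper's strategy---spherical decomposition, Hankel transform, frequency localization, and Bessel-function bounds---but the summation step has a real gap. The range \eqref{pqrange} includes $q=\infty$ (indeed the paper proves only $q=\infty$ directly and then interpolates with $(p,q)=(\infty,2)$), and at $q=\infty$ the Littlewood--Paley square-function inequality you invoke fails: there is no bound of the form $\|u\|_{L^\infty_{dr}}\lesssim \|(\sum_j|P_ju|^2)^{1/2}\|_{L^\infty_{dr}}$. If instead you sum the frequency pieces by the triangle inequality you land in a Besov-type $\ell^1$ norm, strictly stronger than $\dot H^s_A$, and the argument does not close. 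The paper's fix is a genuinely additional idea: decompose \emph{simultaneously} in spatial dyadic shells $r\in[R,2R]$ and in frequency shells $N\in2^{\Z}$, prove the localized estimate (Proposition~\ref{LRE}) with decay $\min\{(NR)^{\varepsilon/2},(NR)^{1/p-1/2}\}$ on each block, and then run a Schur-test argument on this kernel to recover the $\ell^2$ (i.e.\ $\dot H^s_A$) summation in $N$. Your sketch stops short of this; the ``Littlewood--Paley / Minkowski'' mechanism you describe does not close at the endpoint $q=\infty$, which is exactly the case the paper singles out as requiring new work.

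Your treatment of the Dirac flow is a \emph{different} route from the paper's. The paper does not reduce to the wave equation via $\mathcal D_A^2=H_A$; instead it builds a ``relativistic Hankel transform'' $\mathcal P_m$ (Definition~\ref{hanktras}) out of the explicit Dirac eigenspinors \eqref{contspecdirac}, which involve Bessel functions of the two orders $|m+\alpha|$ and $|m+1+\alpha|$, and then reruns the same argument with that transform. Your squaring trick is cleaner if it can be justified: one must check that, for the specific self-adjoint realizations in use, the identity $\mathcal D_A^2=H_A$ holds as an operator equality (formally $\mathcal D_A^2=H_A-\sigma_3 B$ with $B$ a multiple of $\delta_0$, so this is a domain question at the origin), and that $\mathrm{sgn}(\mathcal D_A)=\mathcal D_A H_A^{-1/2}$ is well-defined, unitary on $L^2$, and commutes with the functional calculus of $H_A$. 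Granting those, your reduction gives \eqref{stri-D} from \eqref{stri-w} directly. The paper's approach avoids the domain subtlety at the price of repeating the Bessel analysis for two adjacent orders.

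A small slip: the endpoint $(p,q)=(\infty,2)$ is $L^\infty_t L^2_{rdr}L^2_\theta=L^\infty_tL^2_x$, i.e.\ pure energy conservation; there is no ``$L^\infty$ bound in $r$'' involved.
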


\begin{remark}
The idea of using angular regularity to obtain some refined version of Strichartz estimates is not new: in particular, we stress the fact that we recover exactly the same range obtained for the free wave dynamics for radial data, see Theorem 1.3 in \cite{sterbenz}. For more results on the free equations, we refer to \cite{fangwang,jwy}.
%
\begin{center}
 \begin{tikzpicture}[scale=1]

\draw[->] (4,0) -- (8,0) node[anchor=north] {$\frac{1}{p}$};
\draw[->] (4,0) -- (4,4)  node[anchor=east] {$\frac{1}{q}$};

\draw  (4.1, -0.1) node[anchor=east] {$O$};
\draw  (7, 0) node[anchor=north] {$\frac12$};
\draw  (4, 3) node[anchor=east] {$\frac12$};
\draw  (5.5, 0) node[anchor=north] {$\frac14$};

\draw[thick] (4,3) -- (7,0);  
\draw[red,thick] (4,3) -- (5.5,0);
\draw[red, dashed,thick] (4,3) -- (7,0); 
\draw (3.9,3.15) node[anchor=west] {$A$};
\draw (6.9,0.2) node[anchor=west] {$B$};
\draw (5.5,-0.2) node[anchor=west] {C};
\draw (6,2.6) node[anchor=west] {$\frac{1}{p}+\frac{1}{q}=\frac12$};

\draw (6,1.6) node[anchor=west] {$\frac{2}{p}+\frac{1}{q}=\frac12$};

\draw (7,0) circle (0.06);

\filldraw[fill=gray!30](4,3)--(5.51,0)--(7,0); 
\filldraw[fill=gray!50](4,3)--(5.49,0)--(4,0); 


\draw[<-] (5,2.1) -- (6,2.6) node[anchor=south]{$~$};
\draw[<-] (4.86,1.3) -- (6,1.6) node[anchor=south]{$~$};

\path (6,-1.5) node(caption){Diagrammatic picture of the admissible range of $(p,q)$.};  

\end{tikzpicture}

\end{center}

\begin{remark}
For free wave and Dirac,  the classical Strichartz estimates (without loss of angular regularity) sharply hold for admissible pairs $(1/p,1/q)$ in the region AOC.
But, as far as we know,  there is no result on Strichartz estimates for wave and Dirac with Aharonov-Bohm except for \cite{fanzz}, that is a work in progress. By making use of angular regularity, we are able to cover the region AOB (except the line AB).
\end{remark}

To cover the range \eqref{pqrange}, we will only need to prove Strichartz estimates at the bottom line $OB$ (except point $B$), that is for $p>2$ and $q=\infty$, and then interpolate with the standard $L^\infty_tL^2_x$-norm estimate. We should also mention the paper \cite{mach} in which angular regularity is exploited to get ``almost" the 3D endpoint estimate (both for the wave and Dirac equations), and \cite{cacdan} in which a (small) potential perturbation is also included.
\end{remark}

\begin{remark}
As suggested by \cite{cacfan2}, with some additional technical care it would be possible to obtain Strichartz estimates for fractional Schr\"odinger equation associated to Hamiltonian \eqref{eq:H}, that is for the flow $e^{it{H_A}^{a/2}}$ for any $a>0$. Anyway, we prefer to limit our presentation here to the case $a=1$.
\end{remark}
%

\begin{remark}
It might be possible to generalize the result above (at least in the case of the wave equation) to deal with a more generic magnetic potential $A:\mathbb{S}^{N-1}\rightarrow \R^N$ in dimension $N\geq2$ satisfying the transversality condition $A(\theta)\cdot \theta=0$ for all $\theta\in\mathbb{S}^{N-1}$. Dispersive equations with potentials of these forms (and even more general ones, including also scaling critical zero-order terms) have been dealt with in literature (see e.g. \cite{fanfel}, \cite{fanzz}).
Nevertheless, as this would require a fair amount of additional technicalities and as, again, the most relevant choice from a physical point of view is given by \eqref{AB}, we prefer not to deal with the general setting.
\end{remark}

The staring role in our proof is played by the Hankel transform, which has proved in the last years to be a very effective and successful tool in the analysis of dispersive dynamics, in particular when critical perturbations come into play. The main advantage of it is in that, as we will see, it allows an explicit representation of the solution in terms of a series that involves the eigenfunctions of the operator. This fact, combined with the $L^2$ orthogonality of spherical harmonics, allows in a quite standard way to obtain estimates with angular regularity. In a nutshell, we can summarize the proof of Theorem \ref{teo1} in the following main steps
\begin{enumerate}
\item Exploit the ``spherical symmetry" of the equation to reduce to a sum of radial problems;
\item Use the Hankel transform to obtain an explicit representation of the solution;
\item Prove Strichartz estimates for frequency-localized initial data;
\item Use a dyadic decomposition and a scaling argument to obtain the final result.
\end{enumerate}
Step $(3)$ turns to be the most technically involved, and requires the use of some precise estimates on Bessel functions (see Proposition \ref{LRE}). This strategy has been strongly inspired by \cite{miao,miao1}.
Nevertheless, we should stress some significant difference with respect to \cite{miao}: in that paper the above strategy was used to {\em improve} the range of admissible exponents for Strichartz estimates for the wave equation with inverse square potentials obtained in \cite{burq1}, and those estimates were actually used in their proof. Here on the one hand, as  no Strichartz estimates are available at the moment for the solutions to \eqref{eq:wave}-\eqref{eq:dirac}, we have to prove \eqref{stri-w} directly instead of interpolating with the known Strichartz estimates. On the other hand, due to the failure of Littlewood-Paley square function inequality at the $L^\infty_{dr}$ level, we here use a different argument avoiding this to prove \eqref{stri-w} with $q=\infty$.\vspace{0.2cm}

Finally, we should mention that this same strategy is in development in \cite{cacserzha} to deal with the massless Dirac-Coulomb equation: this is definitely a much harder problem, mainly because of the fact that the generalized eigenfunctions of the Dirac-Coulomb operator enjoy a complicated representation involving confluent hypergeometric functions (instead of the Bessel ones that appear in the Aharonov-Bohm case). As a consequence, the estimates on the solution, that can be written after constructing a suitable ``relativistic Hankel transform", are quite delicate to be proved; nevertheless, a result similar to Theorem \ref{teo1} can be obtained.
\medskip

As a complementary result, we provide a local smoothing estimate for the dynamics of the Klein-Gordon equation with a magnetic field \eqref{AB}, that is for the solutions to system (we are taking $m=1$)
\begin{equation}\label{eq:kg}
\begin{cases}
\partial_t^2v+H_Av+v=0
\\
v(0,\cdot)=v_0(\cdot),
\\
\partial_tv(t,\cdot)=v_1(\cdot)
\end{cases}
\end{equation}
which is given by the formula

\begin{equation}\label{solkg}
v(t,\cdot)=\cos\left(t\sqrt{H_A+1}\right)v_0(\cdot)+\frac{\sin\left(t\sqrt{H_A+1}\right)}{\sqrt{H_A+1}}v_1(\cdot)
\end{equation}
$$
=\Re\left(e^{it\sqrt{H_A+1}}\right)v_0(\cdot)+\frac{\Im\left(e^{it\sqrt{H_A+1}}\right)}{\sqrt{H_A+1}}v_1(\cdot).
$$
Notice that the Klein-Gordon equation, due to the presence of the additional mass term, does not exhibit a scaling, and therefore some slight additional care is needed. Nevertheless, by exploiting a separate analysis of high and low frequencies, we are able to prove the following local smoothing estimate, which complements the ones for the fractional Schr\"odinger and Dirac equations obtained respectively in \cite{cacfan2} and \cite{cacfan}.

\begin{theorem}\label{teo3}
Let $v$ be a solution to \eqref{eq:kg} and let $\varepsilon$ be in \eqref{sma-eig}.
 There exists a constant $C$ such that for any $1\leq\beta<1+\varepsilon$
$$
\| |x|^{-\beta}v\|_{L^2_tL^2_x}\leq C \big(\| (1+H_A)^{\frac{2\beta-1}4}v_0\|_{L^2}  +\| (1+H_A)^{\frac{2\beta-3}4}v_1\|_{L^2}  \big).
$$
\end{theorem}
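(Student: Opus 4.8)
The plan is to run the Hankel-transform strategy of Theorem~\ref{teo1} on the two Klein--Gordon propagators and then, via Plancherel in time, to reduce the weighted space--time bound to a uniform estimate on a Weber--Schafheitlin type Bessel integral.

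First I would use \eqref{solkg} and write the trigonometric factors there as combinations of $e^{\pm it\sqrt{H_A+1}}$; by the triangle inequality it is enough to bound $\||x|^{-\beta}e^{it\sqrt{H_A+1}}g\|_{L^2_tL^2_x}$ by $\|(1+H_A)^{\frac{2\beta-1}{4}}g\|_{L^2}$ for $g=v_0$ (the $e^{-it\sqrt{H_A+1}}$ piece is identical, and the $v_1$-term is analogous up to the bounded factor $(1+H_A)^{-1/2}$, which lowers the exponent to $\frac{2\beta-3}{4}$). Then I would expand $g=\sum_{m\in\Z}g_m(r)\phi_m(\theta)$, recall that $H_A$ restricted to $[\phi_m]$ is the Bessel operator of index $\nu_m=|m+\alpha|$, diagonalised by $\mathcal{H}_{\nu_m}$, and write
\[
e^{it\sqrt{H_A+1}}g=\sum_{m\in\Z}\phi_m(\theta)\int_0^\infty J_{\nu_m}(r\rho)\,e^{it\sqrt{\rho^2+1}}\,(\mathcal{H}_{\nu_m}g_m)(\rho)\,\rho\,d\rho .
\]
Using the $L^2_\theta$-orthogonality of the $\phi_m$, then for fixed $m$ and $r$ the change of variables $\mu=\sqrt{\rho^2+1}$ (so $\rho\,d\rho=\mu\,d\mu$) followed by Plancherel in $t$, and finally the scaling $s=r\rho$, one is led to
\[
\||x|^{-\beta}e^{it\sqrt{H_A+1}}g\|_{L^2_tL^2_x}^2\simeq\sum_{m\in\Z}c_{\nu_m,\beta}\int_0^\infty \rho^{2\beta-2}\,\sqrt{\rho^2+1}\,|(\mathcal{H}_{\nu_m}g_m)(\rho)|^2\,\rho\,d\rho,\qquad c_{\nu,\beta}:=\int_0^\infty s^{1-2\beta}|J_\nu(s)|^2\,ds .
\]

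Two facts remain, both elementary. (i) $\sup_{m\in\Z}c_{\nu_m,\beta}<\infty$, and this is exactly where the range $1\le\beta<1+\varepsilon$ enters: near $s=0$ the bound $|J_\nu(s)|\lesssim (s/2)^\nu/\Gamma(\nu+1)$ makes $\int_0^1 s^{1-2\beta}|J_{\nu_m}(s)|^2\,ds$ finite and uniformly bounded in $m$ precisely when $\nu_m>\beta-1$ for every $m$, i.e.\ $\beta<1+\varepsilon$; near $s=\infty$ the Bessel estimates of Proposition~\ref{LRE} (exponential smallness for $s\lesssim\nu$, the $\nu^{-1/3}$ bound near the turning point, the $s^{-1/2}$ decay in the oscillatory regime) give $\int_1^\infty s^{1-2\beta}|J_{\nu_m}(s)|^2\,ds\lesssim 1$ uniformly in $m$ as soon as $\beta>\frac12$ --- alternatively one may simply invoke the closed Weber--Schafheitlin evaluation of $c_{\nu,\beta}$ and Gamma-function asymptotics. (ii) The pointwise weight comparison $\rho^{2\beta-2}\sqrt{\rho^2+1}\lesssim(1+\rho^2)^{\frac{2\beta-1}{2}}$, i.e.\ $\rho^{2\beta-2}\lesssim(1+\rho^2)^{\beta-1}$: immediate for $\rho\ge1$ and, for $\rho\le1$, true exactly because $\beta\ge1$ --- the point at which the absence of scaling of the Klein--Gordon equation forces a split between high and low frequencies. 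Since, by functional calculus and Hankel--Plancherel, $\|(1+H_A)^{\frac{2\beta-1}{4}}g\|_{L^2}^2\simeq\sum_{m}\int_0^\infty(1+\rho^2)^{\frac{2\beta-1}{2}}|(\mathcal{H}_{\nu_m}g_m)(\rho)|^2\,\rho\,d\rho$, combining (i)--(ii) with the displayed identity gives the $v_0$-estimate; for $v_1$ the left-hand side carries the extra factor $(1+\rho^2)^{-1/2}$ against the weight $(1+\rho^2)^{\frac{2\beta-3}{2}}$, and one verifies the very same inequality $\rho^{2\beta-2}\lesssim(1+\rho^2)^{\beta-1}$.

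I expect the main obstacle to be the uniform-in-$m$ control of $c_{\nu_m,\beta}$: one must simultaneously guarantee that the worst angular mode, with $\nu_{m_0}=\varepsilon$, is still integrable at the origin --- exactly the constraint $\beta<1+\varepsilon$ --- while the large-$\nu$ modes contribute a bounded (indeed vanishing) amount, which is where the fine Bessel bounds of Proposition~\ref{LRE} are really used. The remaining low-frequency subtlety, where $\sqrt{\rho^2+1}$ degenerates to $1$ and no scaling is available, is what pins down the lower bound $\beta\ge1$ and is settled by the elementary weight comparison. The justifications of Fubini and of the time-Plancherel identity on a dense class, and the passage back from $e^{it\sqrt{H_A+1}}$ to the $\cos/\sin$ propagators, are routine.
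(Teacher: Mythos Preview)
Your argument is correct and reaches the result by a more direct route than the paper's. After the same Hankel representation and time-Plancherel step, the paper introduces an explicit Littlewood--Paley cutoff $\varphi_0(\sqrt{H_A+1})$ to separate low and high frequencies, then dyadically decomposes both in $\rho$ and in $r$, reducing matters to the quantity $G_m(R,2^j)=\int_R^{2R}\int|J_{|m+\alpha|}(r\rho)\tilde k_m^l(2^j\rho)|^2\varphi^2(\rho)\,d\rho\,dr$, which it estimates by quoting an external result (Proposition~4.2 of \cite{zhazhe}), and finally resums the dyadic pieces. You instead integrate in $r$ in one stroke via the Weber--Schafheitlin integral $c_{\nu,\beta}=\int_0^\infty s^{1-2\beta}J_\nu(s)^2\,ds$ and compress the low/high frequency dichotomy into the single pointwise inequality $\rho^{2\beta-2}\sqrt{\rho^2+1}\lesssim(1+\rho^2)^{(2\beta-1)/2}$, valid precisely for $\beta\ge1$. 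Your version is shorter and self-contained---the closed form $c_{\nu,\beta}=\Gamma(2\beta-1)\Gamma(\nu+1-\beta)/\bigl(2^{2\beta-1}\Gamma(\beta)^2\Gamma(\nu+\beta)\bigr)$ makes the uniform-in-$m$ bound (and the role of $\beta<1+\varepsilon$) completely transparent---whereas the paper's dyadic scheme is more robust and transplants readily to settings where the radial integral does not evaluate in closed form. One small correction: the Bessel estimates you cite (exponential smallness for $s\lesssim\nu$, the $\nu^{-1/3}$ bound near the turning point) are not contained in Proposition~\ref{LRE}, which is a space--time estimate rather than a collection of pointwise Bessel bounds, nor anywhere else in the paper; for a clean write-up either supply them separately or, more simply, stick with the Weber--Schafheitlin evaluation together with the Stirling asymptotics $\Gamma(\nu+1-\beta)/\Gamma(\nu+\beta)\sim\nu^{1-2\beta}$.
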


The plan of the paper is the following. Section \ref{secpre} is devoted to introduce the necessary preliminaries (overview of the spectral theory of the operators, spherical decomposition, Hankel transform and estimates on Bessel functions), while in Section \ref{secmainres} and \ref{teo3sec} we provide the proofs for our main results.

\medskip

\textbf{Acknowledgments.} The first author acknowledges support from the University of Padova STARS project ``Linear and Nonlinear Problems for the Dirac Equation" (LANPDE). The last two authors were supported by National Natural
Science Foundation of China (11771041, 11831004) and H2020-MSCA-IF-2017(790623).

\section{Preliminaries}\label{secpre}

In this section we present all the setup and the preliminaries needed to prove our results.

\subsection{Spherical decomposition, spectral theory and Hankel transform} A crucial aspect of the dynamics of dispersive equations in Aharonov-Bohm field is in that it is possible to decompose the dynamics into a sum of radial dynamics. We summarize this well known fact in the following

\begin{proposition}\label{propdec}
Let $A_B$ be given by \eqref{AB}, and let $\phi_m(\theta)=\frac{e^{im\theta}}{\sqrt{2\pi}}$ for $\theta\in[0,2\pi)$ and $m\in\Z$ be a complete orthonormal set on $L^2(\mathbb S^1)$. Then the following decompositions hold:
\begin{itemize}
\item {Laplacian decomposition.} There is a canonical isomorphism
\begin{equation*}
L^2(\R^2)\cong \bigoplus_{m\in\Z}L^2(\mathbb{R}_+,rdr)\otimes [\phi_m],
\end{equation*}
by means of the following decomposition:
\begin{equation*}
\Psi(x)=\sum_{m\in\mathbb{Z}}\frac{1}{\sqrt{2\pi}}\kappa_m(r)e^{im\theta},
\end{equation*}
where $\Psi\in L^2(\R^2)$ and $\kappa_m\in L^2(\mathbb{R}_+,rdr)$.
The action of the operator $H_A$ defined in \eqref{eq:H} with respect to the basis $\{\frac{e^{im\theta}}{\sqrt{2\pi}}\}$ is given by 
 \begin{equation}\label{H-alpha}
 \displaystyle H_{\alpha,m}=-\frac{d^2}{dr^2}-\frac1r\frac{d}{dr}+\frac{(m+\alpha)^2}{r^2},
  \end{equation} 
and $H_A$ on $C^\infty_0(\R^2)$ is unitary equivalent to the direct sum of $H_{\alpha,m}$ that is
 \begin{equation}\label{dec}
H_A= \bigoplus_{m\in\mathbb{Z}}H_{\alpha,m}.
 \end{equation} 
 
\item {Dirac decomposition.} There is a canonical isomporphism
\begin{equation*}
L^2(\mathbb{R}^2)^{2} \cong \bigoplus_{m\in\Z}L^2(\mathbb{R}_+,rdr)\otimes [h_m]
\end{equation*}
with $[h_m]=\{\phi_m,\phi_{m+1}\}$ by means of the following decomposition:
\begin{equation*}
\Phi(x)=\sum_{m\in\mathbb{Z}}\frac{1}{\sqrt{2\pi}}\left(\begin{array}{cc} f_m(r)\\
 g_m(r) e^{i\theta}
\end{array}\right)e^{im\theta},
\end{equation*}
where $\Phi\in L^2(\R^2)^2$ and $f_m, g_m\in L^2(\mathbb{R}_+,rdr)$. The action of the operator $\mathcal{D}_A$ defined in \eqref{dirham} with respect to the basis$\frac1{\sqrt{2\pi}}\{e^{im\theta},e^{i(m+1)\theta}\}$ is given by
\begin{equation}\label{partham}
\displaystyle
\parmag=\left(\begin{array}{cc}0 &-i\left(\partial_r+\frac{m+\alpha+1}r\right) \\-i\left(\partial_r-\frac{m+\alpha}r\right) & 0\end{array}\right).
\end{equation}
and $\mathcal{D}_A$ on $C^\infty_0(\R^2)^2$ is unitary equivalent to the direct sum of $\parmag$, that is
\begin{equation*}
\mathcal{D}_A\cong \bigoplus_{m\in\mathbb{Z}}\parmag.
\end{equation*}
\end{itemize}

\end{proposition}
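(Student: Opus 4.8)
\textbf{Proof strategy for Proposition \ref{propdec}.}

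The plan is to treat the Laplacian and Dirac cases separately, in each case reducing the abstract unitary equivalence to an explicit computation on the angular Fourier series. First I would recall that the family $\{\phi_m(\theta) = e^{im\theta}/\sqrt{2\pi}\}_{m\in\Z}$ is a complete orthonormal basis of $L^2(\mathbb S^1)$; by Fubini/Tonelli and Plancherel in the $\theta$ variable, every $\Psi\in L^2(\R^2)$ admits the expansion $\Psi(r,\theta) = \sum_{m\in\Z}\frac{1}{\sqrt{2\pi}}\kappa_m(r)e^{im\theta}$ with $\kappa_m(r) = \frac{1}{\sqrt{2\pi}}\int_0^{2\pi}\Psi(r,\theta)e^{-im\theta}\,d\theta$, and
$$
\|\Psi\|_{L^2(\R^2)}^2 = \int_0^\infty\int_0^{2\pi}|\Psi(r,\theta)|^2\,d\theta\,r\,dr = \sum_{m\in\Z}\int_0^\infty|\kappa_m(r)|^2\,r\,dr = \sum_{m\in\Z}\|\kappa_m\|_{L^2(\R_+,rdr)}^2 .
$$
This gives the canonical isomorphism $L^2(\R^2)\cong\bigoplus_{m\in\Z}L^2(\R_+,rdr)\otimes[\phi_m]$. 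The Dirac case is the same argument applied componentwise: a spinor $\Phi=(\Phi_1,\Phi_2)^T\in L^2(\R^2)^2$ is expanded so that the first component carries the phase $e^{im\theta}$ and the second the phase $e^{i(m+1)\theta}$ (i.e. one writes $\Phi_1 = \sum_m \frac{1}{\sqrt{2\pi}}f_m(r)e^{im\theta}$ and $\Phi_2 = \sum_m \frac{1}{\sqrt{2\pi}}g_m(r)e^{i(m+1)\theta}$, simply relabelling the index in the second sum); orthonormality of $\{e^{im\theta}\}$ again yields the isometry onto $\bigoplus_{m}L^2(\R_+,rdr)\otimes[h_m]$ with $[h_m]=\{\phi_m,\phi_{m+1}\}$.

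Next I would verify the formulas for the reduced operators by a direct substitution into the differential expressions. For the Laplacian, in polar coordinates $-\Delta = -\partial_r^2 - \frac1r\partial_r - \frac1{r^2}\partial_\theta^2$, while the Aharonov–Bohm vector potential \eqref{AB} in polar coordinates is purely angular, $A_B = \frac{\alpha}{r}\hat\theta$, so $H_A = (-i\nabla + A_B)^2$ acts on $e^{im\theta}$ by replacing $-\frac1{r^2}\partial_\theta^2$ with $\frac1{r^2}(m+\alpha)^2$ — the cross terms $-i\,\mathrm{div}\,A_B$ and $A_B\cdot(-i\nabla)$ combine to produce exactly the shift $m\mapsto m+\alpha$ in the angular eigenvalue, because $\mathrm{div}\,A_B = 0$ away from the origin and $A_B\cdot\nabla$ is a pure $\partial_\theta$ derivative divided by $r^2$. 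This yields \eqref{H-alpha}. For the Dirac operator, one writes $\mathcal D_A = \sigma_1(p_1+A^1)+\sigma_2(p_2+A^2)$ in polar coordinates using $\sigma_1\cos\theta+\sigma_2\sin\theta$ and $-\sigma_1\sin\theta+\sigma_2\cos\theta$ (the radial and angular Pauli combinations), applies it to the ansatz $\big(f_m(r)e^{im\theta},\,g_m(r)e^{i(m+1)\theta}\big)^T$, and checks that the $e^{im\theta}$ and $e^{i(m+1)\theta}$ phases are preserved while the radial coefficients transform by the $2\times2$ matrix of first-order operators in \eqref{partham}; the shifts $\frac{m+\alpha}{r}$ and $\frac{m+\alpha+1}{r}$ arise from differentiating the angular phases together with the contribution of $A_B$. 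Finally, since $C_0^\infty(\R^2)$ (resp. $C_0^\infty(\R^2)^2$) is dense and invariant under this decomposition, and the formal operators $H_{\alpha,m}$, $\parmag$ are symmetric on the corresponding dense domains in $L^2(\R_+,rdr)$ (with the natural Friedrichs/self-adjoint extension understood, cf.\ \cite{fanfel}), the unitary equivalence \eqref{dec} and its Dirac analogue follow by transporting the spectral decomposition through the isometry.

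The only genuinely delicate point — everything else being bookkeeping — is the rigorous identification of the self-adjoint realizations on the two sides: for $\varepsilon = \mathrm{dist}(\alpha,\Z)$ small one must be careful about which boundary condition at $r=0$ is selected in each angular sector $m$ (the operators $H_{\alpha,m}$ are in the limit-circle case at the origin precisely when $|m+\alpha|<1$), so the decomposition \eqref{dec} is to be read as holding for the distinguished (Friedrichs) extension of $H_A$ on $C_0^\infty(\R^2)$, consistently with the choice made in \cite{fanfel}; I would simply cite that reference for the spectral-theoretic details and present the computation above as the algebraic heart of the statement.
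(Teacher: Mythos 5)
Your proposal is correct: the paper itself offers no argument for this proposition beyond citing \cite{adamiteta} for the Laplacian case and \cite{gerb} for the Dirac case, and your sketch reproduces exactly the standard computation contained in those references (angular Fourier decomposition with Plancherel in $\theta$, polar-coordinate substitution giving $H_{\alpha,m}$ and $\parmag$, and the caveat that for $|m+\alpha|<1$ the radial operators are limit-circle at $r=0$, so the distinguished/Friedrichs extension must be fixed, as in \cite{fanfel}). So you take essentially the same route, simply spelling out what the paper delegates to the literature.
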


\begin{remark}
For the sake of simplicity, from now on we will be systematically neglecting all the normalization terms involving $\pi$.
\end{remark}

\begin{proof}
See \cite{adamiteta} for the Laplacian case \cite{gerb} for the Dirac case.
\end{proof}

The spectra of both the operators $H_A$ and $\mathcal{D}_A$ are well known to be purely absolutely continuous, and in particular $\sigma(H_A)=[0,+\infty)$ and $\sigma(\mathcal{D}_A)=\R$ (we refer respectively to \cite{par} and \cite{gerb}). 
and in view of Proposition \ref{propdec} their generalized eigenfunctions can be written in terms of Bessel functions: for a fixed $m\in\mathbb{Z}$ and $E>$0 we have indeed that
\begin{equation}\label{eqspecschr}
H_{\alpha,m}\varphi_{m,E}(r)=E\varphi_{m,E}(r),
\end{equation} 
has the solution
\begin{equation}\label{contspecschr}
\varphi_{m,E}(r)\cong J_{|m+\alpha|}(Er)
\end{equation}
and
\begin{equation}\label{eqspecdirac}
\parmag\chi_{m,E}(r)=E\chi_{m,E}(r),
\end{equation} 
has the solution
\begin{equation}\label{contspecdirac}
\chi_{m,E}(r)=\left(\begin{array}{cc}f_{m,E}(r)\\g_{m,E}(r)\end{array}\right)\cong
\displaystyle
\left(\begin{array}{cc}(\epsilon_m)^mJ_{|m+\alpha|}(Er) \\ i(\epsilon_m)^{m+1}J_{|m+1+\alpha|}(Er)\end{array}\right)
\end{equation}
with
$$
\epsilon_m=\begin{cases}
1\qquad {\rm if}\:m+\alpha\geq0\\
-1\quad\:{\rm if}\:m+\alpha<0.
\end{cases}
$$
\begin{remark}
The generalized eigenfunctions for $\parmag$ for negative values of the energy can be written as
\begin{equation}\label{negen}
\chi_{m,-E}(r)=\overline{\chi_{m,E}}(r)=\left(\begin{array}{cc}(\epsilon_m)^mJ_{|m+\alpha|}(|E|r) \\ -i(\epsilon_m)^{m+1}J_{|m+1+\alpha|}(|E|r)\end{array}\right),
\end{equation}
so that in particular one has
$$
f_{m,-E}(r)=f_{m,E}(r),\qquad g_{m,-E}(r)=-g_{m,E}(r).
$$
\end{remark}

A crucial role is going to be played by the {\em Hankel transform}: we recall the definition of the standard 2-dimensional one, that  for $\nu>0$ is given by
\begin{equation}\label{hankel}
(\mathcal{H}_\nu\phi)(\xi)=\int_0^\infty J_{\nu}(r|\xi|)\phi(r\xi/|\xi|) \;rdr,
\end{equation}
that will play a leading role in the study of the wave dynamics. To deal with the Dirac equation, we need a slight algebraic manipulation of this, due to the fact that the spherical harmonics decomposition forces to work on 2-dimensional radial spaces. We therefore set the following
\begin{definition}\label{hanktras}
Let $\varphi(r)=(\varphi_1(r),\varphi_2(r))\in L^2((0,\infty),r dr)^2$.
For $m\in\mathbb{Z}$, we define the following integral transform
\begin{equation*}\label{H}
\mathcal{P}_m\varphi(E)=
\left(\begin{array}{cc}\mathcal{P}^+_m\varphi(E)\\
\mathcal{P}^-_m\varphi(E)
\end{array}\right)=
\int_0^{+\infty}H_{m}(\varepsilon r)\cdot\varphi(r)rdr
\end{equation*}
where we have introduced the matrix
\begin{equation}\label{matra}
H_{m}=\left(\begin{array}{cc}f_{m,E}(r) &g_{m,E}(r)\\
-f_{m,-E}(r)&-g_{m,-E}(r)
\end{array}\right)
\end{equation}
with $f$ and $g$ given by \eqref{contspecdirac}, so that
\begin{equation}\label{rhankel}
\mathcal{P}_m^{+}\varphi(E)\cong\int_0^\infty \big(J_{|m+\alpha|}(Er)\varphi_1(r)+J_{|m+1+\alpha|}(Er)\varphi_2(r)\big)r dr
\end{equation}
and a similar one for $\mathcal{P}_m^{-}$.
\end{definition}

The Hankel transform (both the standard one \eqref{hankel} and the ``relativistic" one \eqref{rhankel} introduced in Definition \ref{hanktras}) satisfies several important properties, including the fact that it allows to define in a quite standard way the fractional powers of the operators $H_A$ and $\mathcal{D}_A$. For $\mathcal{D}_A$, we refer to Section 2 in \cite{cacfan} and Proposition 2.3 in \cite{cacfan2}; while for $H_A$, we record for convenience and
refer the readers for analogues to M.Taylor \cite[Chapter 9]{Taylor}, (see also \cite{burq1}).
\begin{lemma}\label{lem:hankel}
Let $\mathcal{H}_\nu$ be the Hankel transform in \eqref{hankel} with $\nu=|m+\alpha|$ and $H_{\alpha,m}$ in \eqref{H-alpha}.
Then
\begin{item}
\item $\mathrm{(1)}$ $\mathcal{H}_\nu=\mathcal{H}^{-1}_\nu,$

\item $\mathrm{(2)}$ $\mathcal{H}_\nu$ is self-adjoint, i.e.$\quad \mathcal{H}_\nu=\mathcal{H}^{*}_\nu$,

\item $\mathrm{(3)}$ $\mathcal{H}_\nu$ is an $L^2$ isometry, i.e. $\|\mathcal{H}_\nu f\|_{L^2}=\|f\|_{L^2},$

\item $\mathrm{(4)}$ $\mathcal{H}_\nu(H_{\alpha,m} f)(\rho,\theta)=\rho^2(\mathcal{H}_\nu f)(\rho,\theta),$ for $f\in L^2.$

\end{item}

\end{lemma}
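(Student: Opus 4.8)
The statement is classical and I would present it as a short sketch, reducing everything to the one–dimensional radial picture and reading off (1)--(4) from three standard facts about Bessel functions. After the spherical reduction of Proposition \ref{propdec}, $\mathcal{H}_\nu$ may be regarded as the operator
\[
(\mathcal{H}_\nu\phi)(\rho)=\int_0^\infty J_\nu(r\rho)\,\phi(r)\,r\,dr
\]
on $L^2((0,\infty),r\,dr)$, whose kernel $J_\nu(r\rho)=J_\nu(\rho r)$ is \emph{real} and \emph{symmetric} in $(r,\rho)$. The three facts are: the Hankel inversion theorem; the symmetry and reality of this kernel; and the fact that $r\mapsto J_\nu(r\rho)$ is a generalized eigenfunction of $H_{\alpha,m}$.

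For (1) I would invoke the Hankel inversion theorem, valid since $\nu=|m+\alpha|\ge0>-1$: equivalently, the closure relation $\int_0^\infty J_\nu(r\rho)J_\nu(s\rho)\,\rho\,d\rho=r^{-1}\delta(r-s)$ yields $\mathcal{H}_\nu\circ\mathcal{H}_\nu=\mathrm{Id}$ on a dense subspace (e.g. $C_0^\infty((0,\infty))$), hence on all of $L^2$ once boundedness is available. For (2), Fubini's theorem together with the symmetry and reality of the kernel gives, for $f,g$ in the dense class,
\[
\langle \mathcal{H}_\nu f,g\rangle=\int_0^\infty\!\!\int_0^\infty J_\nu(r\rho)\,f(r)\,\overline{g(\rho)}\,r\,dr\,\rho\,d\rho=\langle f,\mathcal{H}_\nu g\rangle ,
\]
so $\mathcal{H}_\nu^*=\mathcal{H}_\nu$; then (3) is immediate, $\|\mathcal{H}_\nu f\|_{L^2}^2=\langle \mathcal{H}_\nu f,\mathcal{H}_\nu f\rangle=\langle f,\mathcal{H}_\nu^2f\rangle=\|f\|_{L^2}^2$ by (1)--(2), so that $\mathcal{H}_\nu$ is in fact unitary on $L^2((0,\infty),r\,dr)$ and extends as such to $L^2(\R^2)$.

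For (4) I would use that $r\mapsto J_\nu(r\rho)$ solves Bessel's equation, that is, in the $r$ variable,
\[
H_{\alpha,m}\big(J_\nu(r\rho)\big)=\Big(-\partial_r^2-\tfrac1r\partial_r+\tfrac{(m+\alpha)^2}{r^2}\Big)J_\nu(r\rho)=\rho^2\,J_\nu(r\rho),
\]
using $\nu^2=(m+\alpha)^2$. Since $-\partial_r^2-\frac1r\partial_r+\frac{\nu^2}{r^2}=-\frac1r\partial_r(r\partial_r\,\cdot\,)+\frac{\nu^2}{r^2}$ is formally self-adjoint for the measure $r\,dr$, two integrations by parts for $f\in C_0^\infty((0,\infty))$ (all boundary terms vanishing, as $f$ has compact support in $(0,\infty)$) give
\[
\mathcal{H}_\nu(H_{\alpha,m}f)(\rho)=\int_0^\infty\!\big(H_{\alpha,m}J_\nu(r\rho)\big)f(r)\,r\,dr=\rho^2\int_0^\infty J_\nu(r\rho)f(r)\,r\,dr=\rho^2(\mathcal{H}_\nu f)(\rho),
\]
and the identity extends to the domain of $H_{\alpha,m}$ by density and the closedness of $H_{\alpha,m}$; equivalently, (4) states that $\mathcal{H}_\nu$ conjugates $H_{\alpha,m}$ to multiplication by $\rho^2$, which one may also take as the definition of its (Friedrichs) self-adjoint realization.

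Since this lemma is classical, there is no genuinely deep obstacle; the only points needing care are the interchange of integrals in (1)--(2) and the vanishing of the boundary terms in (4). Both are disposed of by working on the dense class $C_0^\infty((0,\infty))$: the behaviour $J_\nu(r\rho)\sim c_\nu(r\rho)^\nu$ as $r\to0$ is harmless because $\nu\ge0$, and the $O(r^{-1/2})$ oscillatory decay of $J_\nu$ at infinity is only needed in order to pass back to $L^2$. For the complete classical argument I would simply refer to \cite[Chapter 9]{Taylor} and \cite{burq1}.
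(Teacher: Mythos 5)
Your proposal is correct, and it reproduces exactly the standard argument (Hankel inversion/closure relation, symmetry and reality of the kernel $J_\nu(r\rho)$, and the Bessel equation plus integration by parts on $C_0^\infty((0,\infty))$ for property (4)) that the paper itself does not write out but delegates to \cite[Chapter 9]{Taylor} and \cite{burq1}. The only point to keep tidy is the order of the density argument — establish (1)--(2) on the dense class first, deduce the isometry there, and then extend all statements to $L^2$ by boundedness — which your sketch essentially does.
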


\subsection{Estimates on Bessel functions} 

In what follows we will make use of a number of estimates on Bessel functions: we collect them in the following two results.

\begin{proposition} Let $J_\nu(r)$ be the Bessel function of order $\nu$.
The following estimates hold true with a constant $C$ independent on $\nu$:
\begin{itemize}
\item Let $\nu>-\frac12$, then
\begin{equation}\label{2.4}
|J_\nu(r)|\leq \frac{Cr^\nu}{2^\nu\Gamma(\nu+\frac12)\Gamma(1/2)}\left(1+\frac1{\nu+1/2}\right)
\end{equation}
and
\begin{equation}\label{2.5}
|J'_\nu(r)|\leq \frac{C(\nu r^{\nu-1}+r^\nu)}{2^\nu\Gamma(\nu+\frac12)\Gamma(1/2)}\left(1+\frac1{\nu+1/2}\right).
\end{equation}
\item Let $r,\nu\gg 1$. Then
\begin{equation}\label{lem2.2}
|J'_\nu(r)|\leq \frac{C}{\sqrt{r}}.
\end{equation}
\end{itemize}
\end{proposition}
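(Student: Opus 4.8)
The plan is to treat the two halves of the statement separately. The pointwise bounds \eqref{2.4}--\eqref{2.5}, valid for all $\nu>-\tfrac12$, will follow from the Poisson integral representation of $J_\nu$; the uniform decay \eqref{lem2.2}, valid for $r,\nu\gg1$, will require the large-order asymptotics of Bessel functions together with a case analysis around the turning point $r=\nu$.

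For \eqref{2.4} I would start from the classical Poisson formula
\[
J_\nu(r)=\frac{(r/2)^\nu}{\Gamma(\nu+\tfrac12)\,\Gamma(\tfrac12)}\int_{-1}^{1}e^{irt}\,(1-t^2)^{\nu-\frac12}\,dt,\qquad \nu>-\tfrac12,
\]
bound $|e^{irt}|\le1$, and estimate $\int_{-1}^{1}(1-t^2)^{\nu-1/2}\,dt$ by splitting $[-1,1]$ into the central interval $[-\tfrac12,\tfrac12]$, on which $(1-t^2)^{\nu-1/2}$ is bounded by an absolute constant for every $\nu>-\tfrac12$, and the two boundary intervals. On $[\tfrac12,1]$, if $\nu\ge\tfrac12$ the integrand is $\le1$, while if $-\tfrac12<\nu<\tfrac12$ then $(1-t^2)^{\nu-1/2}=(1-t)^{\nu-1/2}(1+t)^{\nu-1/2}\le(1-t)^{\nu-1/2}$ (since $1+t\ge1$ and $\nu-\tfrac12<0$) and $\int_{1/2}^{1}(1-t)^{\nu-1/2}\,dt=(\nu+\tfrac12)^{-1}(1/2)^{\nu+1/2}\le(\nu+\tfrac12)^{-1}$; the interval $[-1,-\tfrac12]$ is symmetric. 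Hence $\int_{-1}^{1}(1-t^2)^{\nu-1/2}\,dt\le C\bigl(1+(\nu+\tfrac12)^{-1}\bigr)$, which is exactly the factor in \eqref{2.4}. (Equivalently, this integral is the Beta function $B(\tfrac12,\nu+\tfrac12)$, and one bounds $\Gamma(\nu+\tfrac12)\Gamma(\tfrac12)/\Gamma(\nu+1)$ using Stirling for large $\nu$ and the simple pole of $\Gamma$ at the origin for $\nu$ near $-\tfrac12$.) For \eqref{2.5} I would differentiate this representation in $r$: the term in which $(r/2)^\nu$ is differentiated produces the prefactor $\nu\,r^{\nu-1}$, while the term in which $e^{irt}$ is differentiated keeps the prefactor $r^\nu$ and gains a factor $|t|\le1$; estimating both integrals as above gives \eqref{2.5}.

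The uniform bound \eqref{lem2.2} is the genuinely delicate part, and I would split according to the position of $r$ relative to the turning point $r=\nu$. For $r\le\tfrac12\nu$ one reads off from \eqref{2.5} and Stirling's formula that $|J_\nu'(r)|$ is exponentially small in $\nu$, so the bound is immediate. For $r\ge2\nu$ one is in the oscillatory regime, where Debye's asymptotic expansion gives $J_\nu(r)=\sqrt{2/(\pi\sqrt{r^2-\nu^2})}\,\bigl(\cos\Phi_\nu(r)+O(\,\cdot\,)\bigr)$ with $\Phi_\nu'(r)=\sqrt{1-\nu^2/r^2}\le1$, so that $|J_\nu'(r)|\le C(r^2-\nu^2)^{-1/4}\le Cr^{-1/2}$ on that range. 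The transition zone $\tfrac12\nu\le r\le2\nu$ is the main obstacle: here one must invoke Olver's uniform, Airy-type expansion of $J_\nu(\nu z)$, in which $J_\nu$ carries a prefactor of size $\nu^{-1/3}$ while differentiating in $r$ costs a further $\nu^{-1/3}$ through the chain rule, so that $|J_\nu'(r)|\le C\nu^{-2/3}\le C\nu^{-1/2}\le Cr^{-1/2}$ since $r\asymp\nu$ there. Patching the three regimes --- and noting that the hypothesis $r,\nu\gg1$ excludes the origin and the bounded-order range, where the estimate is classical --- yields \eqref{lem2.2}. Alternatively, this uniform derivative estimate can simply be quoted from the standard Bessel-function literature (Watson, Olver, or the DLMF); in either case the uniformity in $\nu$ inside the turning-point zone is the only point that requires genuine care.
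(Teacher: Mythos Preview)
Your proposal is correct and, in fact, considerably more detailed than what the paper does. The paper's proof of this proposition consists of a single sentence: ``These estimates are quite standard: we refer to Section 2 in \cite{miao} and references therein.'' So there is essentially nothing to compare against here --- the paper simply defers to the literature.

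Your argument is the standard one and is almost certainly what the cited reference contains: the Poisson integral representation for \eqref{2.4}--\eqref{2.5}, with the key observation that the Beta integral $\int_{-1}^{1}(1-t^2)^{\nu-1/2}\,dt$ blows up like $(\nu+\tfrac12)^{-1}$ as $\nu\downarrow-\tfrac12$ and stays bounded for $\nu$ large; and for \eqref{lem2.2} the trichotomy around the turning point $r=\nu$, using the exponential smallness from \eqref{2.5} plus Stirling for $r\le\nu/2$, the Debye oscillatory asymptotics for $r\ge2\nu$, and Olver's uniform Airy expansion in the transition zone. The one remark worth adding is that your computation in the transition zone is slightly informal: the gain of $\nu^{-1/3}$ for the derivative comes not just from a chain-rule factor but from the explicit derivative formula in Olver's expansion (in which $\mathrm{Ai}'$ rather than $\mathrm{Ai}$ appears, together with the prefactor $\nu^{-2/3}$); you have the right conclusion, but the mechanism is the structure of Olver's formula rather than a naive differentiation. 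That said, since the paper itself is content to cite the literature, your sketch already exceeds what is required.
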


\begin{proof}
These estimates are quite standard: we refer to Section 2 in \cite{miao} and references therein.
\end{proof}

\begin{proposition}\label{lem:Bessel}  Let $\chi\in\mathcal{C}_c^\infty([0,1])$ be such that $\chi(x)\in[0,1]$ and $\chi(x)=1$ for $x\in[0,1/2]$. Let $0<\delta\ll1$. Then there exists a decomposition for the Bessel function $J_\nu(r)$:
\begin{equation}\label{dec}
J_{\nu}(r)=J_{\nu,1}(r)+J_{\nu,2}(r)+E_\nu(r)
\end{equation}
where
\begin{equation*}
\begin{split}
J_{\nu,1}(r)=\frac{1}{2\pi}\int_{-\pi}^\pi e^{ir\sin\theta-i\nu\theta} \chi(\frac{\theta}{\delta})
d\theta,\quad
J_{\nu,2}(r)&=\frac{1}{2\pi}\int_{-\pi}^\pi e^{ir\sin\theta-i\nu\theta} (1-\chi)(\frac{\theta}{\delta})
d\theta.
\end{split}
\end{equation*}
and 
\begin{equation*}
\begin{split}
E_{\nu}(r)=-\frac{\sin(\nu\pi)}{\pi}\int_0^\infty e^{-(r\sinh s+\nu s)}
ds\end{split}
\end{equation*}
Furthermore, for $r\gg1$, there exists a constant $C$ independent of $r,\nu$ such that
\begin{equation}\label{est:E'}
 |E_{\nu}(r)|+ |E'_{\nu}(r)| \leq Cr^{-1}
\end{equation}
and
\begin{equation}\label{est:J2}
 |J_{\nu,2}(r)|+ |J'_{\nu,2}(r)| \leq Cr^{-1/2}.
\end{equation}

\end{proposition}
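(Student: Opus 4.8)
\textbf{Proof proposal for Proposition \ref{lem:Bessel}.}

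The plan is to start from the Schl\"afli--Sommerfeld integral representation of the Bessel function, namely
\begin{equation*}
J_\nu(r)=\frac{1}{2\pi}\int_{-\pi}^\pi e^{ir\sin\theta-i\nu\theta}\,d\theta-\frac{\sin(\nu\pi)}{\pi}\int_0^\infty e^{-(r\sinh s+\nu s)}\,ds,
\end{equation*}
valid for $r>0$ and all real $\nu$ (for integer $\nu$ the second term vanishes, consistently with the classical formula). The last term is precisely $E_\nu(r)$, and inserting the partition of unity $1=\chi(\theta/\delta)+(1-\chi)(\theta/\delta)$ into the first integral splits it into $J_{\nu,1}(r)$ and $J_{\nu,2}(r)$, giving the decomposition \eqref{dec}. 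So the representation is immediate; the content of the proposition is the two decay estimates \eqref{est:E'} and \eqref{est:J2}.

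For \eqref{est:E'}, I would bound $|E_\nu(r)|\le \frac1\pi\int_0^\infty e^{-r\sinh s}\,ds$ using $|\sin(\nu\pi)|\le 1$ and $e^{-\nu s}\le 1$ (here one uses that $\nu=|m+\alpha|\ge 0$, so $s\mapsto e^{-\nu s}$ is bounded by $1$ on $s\ge 0$). Since $\sinh s\ge s$, this is $\le \frac1\pi\int_0^\infty e^{-rs}\,ds=\frac{1}{\pi r}$. For the derivative, $E_\nu'(r)=-\frac{\sin(\nu\pi)}{\pi}\int_0^\infty(-\sinh s)e^{-(r\sinh s+\nu s)}\,ds$, so $|E_\nu'(r)|\le\frac1\pi\int_0^\infty \sinh s\,e^{-r\sinh s}\,ds$; the substitution $u=\sinh s$, $du=\cosh s\,ds\ge du$... more carefully, with $u=\sinh s$ one gets $\int_0^\infty \frac{u}{\sqrt{1+u^2}}e^{-ru}\,du\le\int_0^\infty e^{-ru}\,du=\frac1r$. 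Both bounds are uniform in $\nu\ge 0$, which gives \eqref{est:E'}.

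For \eqref{est:J2}, the idea is a non-stationary phase / integration by parts argument in $\theta$. On the support of $(1-\chi)(\theta/\delta)$ one has $|\theta|\ge\delta/2$, so the phase $\varphi(\theta)=r\sin\theta-\nu\theta$ has derivative $\varphi'(\theta)=r\cos\theta-\nu$ which we must handle carefully since it can vanish. I would split the $\theta$-integral into the region where $|\cos\theta|\le 1/2$ (i.e. $\theta$ near $\pm\pi/2$), where one is close to the stationary point and estimates the integral trivially by the measure of the region, combined with a van der Corput lemma of second order (since $\varphi''(\theta)=-r\sin\theta$ has size $\sim r$ there), yielding a gain of $r^{-1/2}$; and the complementary region $|\cos\theta|\ge 1/2$ away from $\theta=0$, where one integrates by parts once against $\frac{d}{d\theta}e^{i\varphi}=i\varphi' e^{i\varphi}$, picking up $|\varphi'|^{-1}$. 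The delicate point is that $\varphi'=r\cos\theta-\nu$ can still be small in this second region when $\nu\approx r\cos\theta$; there one again invokes van der Corput with the second derivative bound $|\varphi''|\gtrsim r$ (valid once $|\sin\theta|\gtrsim\delta$) to get the $r^{-1/2}$ gain. Summing the contributions, and treating $J_{\nu,2}'$ the same way (differentiation in $r$ brings down a bounded factor $\sin\theta$), gives $|J_{\nu,2}(r)|+|J_{\nu,2}'(r)|\le Cr^{-1/2}$ with $C$ independent of $r,\nu$. The main obstacle is bookkeeping the van der Corput estimates uniformly in $\nu$ near the turning point $r\cos\theta=\nu$; everything else is elementary. (Alternatively, one can cite the stationary phase analysis of this exact Schl\"afli splitting as carried out in \cite{miao}, from which \eqref{est:J2} follows directly.)
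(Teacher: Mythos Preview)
Your proposal is correct and follows essentially the same route as the paper: the Schl\"afli representation yields the decomposition, $E_\nu$ and $E_\nu'$ are bounded directly (the paper just asserts $|E'_\nu(r)|\le C(r+\nu)^{-1}$, which your substitution $u=\sinh s$ makes explicit), and $J_{\nu,2}$ is handled by the van der Corput lemma.

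The only difference is how the van der Corput step is organized. Rather than splitting at $|\cos\theta|=\tfrac12$ as you do, the paper writes the phase as $r\Phi(\theta)$ with $\Phi(\theta)=\sin\theta-\tfrac{\nu}{r}\theta$ and splits the support of $(1-\chi)(\theta/\delta)$ at $|\theta|=\tfrac{\pi}{2}+\delta$. On the outer pieces $[\tfrac{\pi}{2}+\delta,\pi]$ and $[-\pi,-\tfrac{\pi}{2}-\delta]$ one has $\cos\theta<0$, so $|\Phi'|=|\cos\theta-\tfrac{\nu}{r}|=|\cos\theta|+\tfrac{\nu}{r}\ge\sin\delta$ and first-order van der Corput gives $C_\delta r^{-1}$; on the inner pieces $\delta\le|\theta|\le\tfrac{\pi}{2}+\delta$ one has $|\Phi''|=|\sin\theta|\ge\sin\delta$ and second-order van der Corput gives $C_\delta r^{-1/2}$. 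This partition sidesteps the ``delicate point'' you raise about $r\cos\theta=\nu$ (it simply falls in the second-order region), and it also covers $\theta$ near $\pm\pi$ cleanly---there your parenthetical lower bound $|\varphi''|\gtrsim r$ actually fails since $\sin\theta\to 0$, but $\varphi'$ is large, which is exactly why the paper puts that range in the first-order region.
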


\begin{proof} Most of these properties can be found in Watson \cite{Watson}; we provide a sketch of their proof for convenience. We use the Schl\"afli's integral representation (see \cite{Watson} pag. 177) to write  
\begin{align}\label{SIR'}
J_{\nu}(r)&=\frac{1}{2\pi}\int_{-\pi}^\pi e^{ir\sin\theta-i\nu\theta}
d\theta-\frac{\sin(\nu\pi)}{\pi}\int_0^\infty e^{-(r\sinh s+\nu s)}
ds,
\end{align}
then it follows \eqref{dec}. A direct computation gives
\begin{equation}
 |E'_{\nu}(r)|=\Big|\frac{\sin(\nu\pi)}{\pi}\int_0^\infty e^{-(r\sinh(s)+\nu s)}
\sinh(s)ds \Big|\leq C(r+\nu)^{-1}
\end{equation}
which implies \eqref{est:E'}. Now we consider \eqref{est:J2}.
Let
\begin{equation*}
\Phi_{r,\nu}(\theta)=\sin\theta-\frac{\nu}{r}\theta
\end{equation*}
and a simple computation shows the derivatives 
\begin{equation*}
\Phi'_{r,\nu}(\theta)=\cos\theta-\frac{\nu}{r}, \quad \Phi''_{r,\nu}(\theta)=-\sin\theta.
\end{equation*}
Thus, on intervals $[-\pi,-\frac\pi2-\delta]$ and $[\frac\pi 2+\delta,\pi]$, $\Phi$ is monotonic respectively and $$|\Phi'_{r,\nu}(\theta)|=|\cos\theta-\frac{\nu}{r}|=\frac{\nu}{r}+|\cos\theta|\geq\sin\delta.$$ Therefore Van der Corput lemma(see \cite[Proposition 2, Page 332]{Stein}) implies
\begin{equation}\label{est:J2-1}
\begin{split}
\frac{1}{2\pi}\Big(\int_{-\pi}^{-\frac\pi2-\delta}+\int_{\frac{\pi}2+\delta}^\pi\Big) \Big(e^{ir\sin\theta-i\nu\theta} (1-\chi)(\frac{\theta}{\delta})\Big)
d\theta \leq C_\delta r^{-1}\\
\frac{d}{dr}\left(\frac{1}{2\pi}\Big(\int_{-\pi}^{-\frac\pi2-\delta}+\int_{\frac{\pi}2+\delta}^\pi\Big) \Big(e^{ir\sin\theta-i\nu\theta} (1-\chi)(\frac{\theta}{\delta})\Big)
d\theta \right) \leq C_\delta r^{-1}.
\end{split}
\end{equation}
On the other hand, on the interval $[-\frac\pi2-\delta,-\delta]\cup[\delta, \frac\pi 2+\delta]$, we have
 $$|\Phi''_{r,\nu}(\theta)|=|\sin\theta|\geq\sin\delta.$$ We use the Van der Corput lemma again to obtain
\begin{equation}\label{est:J2-2}
\begin{split}
\frac{1}{2\pi}\Big(\int_{-\frac\pi2-\delta}^{-\delta}+\int_{\delta}^{\frac{\pi}2+\delta}\Big) \Big(e^{ir\sin\theta-i\nu\theta} (1-\chi)(\frac{\theta}{\delta})\Big)
d\theta \leq C_\delta r^{-1/2}\\
\frac{d}{dr}\left(\frac{1}{2\pi}\Big(\int_{-\frac\pi2-\delta}^{-\delta}+\int_{\delta}^{\frac{\pi}2+\delta}\Big) \Big(e^{ir\sin\theta-i\nu\theta} (1-\chi)(\frac{\theta}{\delta})\Big)
d\theta\right) \leq C_\delta r^{-1/2}.
\end{split}
\end{equation}
Noting $r\gg1$ and collecting \eqref{est:J2-1} and \eqref{est:J2-2}, it follows \eqref{est:J2}.

\end{proof}

%
%
%
%

\section{The proof Theorem \ref{teo1}}\label{secmainres}

The proofs of \eqref{stri-w} and \eqref{stri-D} are of course very similar, therefore we provide the details for the one of \eqref{stri-w} and only comment on the necessary modifications needed in order to obtain \ref{stri-D}. Also, we shall focus on the proof of the endpoint case $q=\infty$ as it is the hardest one \eqref{pqrange}, and only comment on the full range \eqref{pqrange} (see Remark \ref{rkrange}). We stress the fact that taking $q=\infty$ prevents the use of Littlewood-Paley theory: we will thus need to use a slightly different argument (as done in \cite{cacserzha}).

\medskip

Relying on proposition \ref{propdec} we start by writing, , for any $u_0\in \dot{H}_A^s(\R^2)$, 
\begin{equation}\label{indata}
u_0(x)=\sum_{m\in\mathbb Z}\kappa_m(r)\phi_m(\theta)
\end{equation}
with $\phi_m(\theta)\cong{e^{im\theta}}$. From Lemma \ref{lem:hankel}, we thus have, for $p>2$ and $\nu=|m+\alpha|>0$,
\begin{eqnarray}\label{stim1}
& &\|\solschr\|_{L^p_tL^\infty_{dr} L^2_\theta}=\|\sum_{m\in\Z}e^{it\sqrt{H_A}}\kappa_m(r)\phi_m(\theta)\|_{L^p_tL^\infty_{dr} L^2_\theta}\\
\nonumber
&=&\| \sum_{m\in\Z} \hank\left[e^{it{\rho}}\hank \kappa_m(r) \phi_m(\theta)\right]\|_{L^p_tL^\infty_{dr} L^2_\theta}
\\
\nonumber
&\leq&
\left(\sum_{m\in\Z}\|  \hank\left[e^{it\rho}g_m(\rho)\right]\|^2_{L^p_tL^\infty_{dr}\,}\right)^{1/2}
\end{eqnarray}
where we are denoting with $g_m(\rho)=\hank \kappa_m(\rho)$.
As a first step, we need a Strichartz estimate for data with localized frequencies

\begin{proposition}\label{proploc}
 Let $u_0$ as in \eqref{indata} be such that 
$\text{supp}\big(\hank \kappa_{m}\big)\subset [1,2]$
for all $m\in\Z$, and let $p>2$. Then
\begin{equation}\label{4.3}
\begin{split}
\|\solschr\|_{L^p_tL^\infty_{dr}\,L^2_\theta}\leq
C\|u_0\|_{L^2_x}.
\end{split}
\end{equation}
\end{proposition}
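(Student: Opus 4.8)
The plan is to reduce the frequency-localized estimate \eqref{4.3} to a pointwise-in-$(t,r)$ dispersive bound for each angular mode and then sum in $m$ using the orthogonality already exploited in \eqref{stim1}. More precisely, after the reduction in \eqref{stim1} it suffices to prove that for each $m\in\Z$ with $\nu=|m+\alpha|\geq\varepsilon>0$, and for $g_m=\hank\kappa_m$ supported in $[1,2]$,
\begin{equation*}
\|\hank[e^{it\rho}g_m(\rho)]\|_{L^p_tL^\infty_{dr}}\leq C\|g_m\|_{L^2_{\rho d\rho}},
\end{equation*}
with $C$ independent of $m$, since $\sum_m\|g_m\|_{L^2_{\rho d\rho}}^2=\|u_0\|_{L^2}^2$ by Lemma \ref{lem:hankel}(3) and Proposition \ref{propdec}. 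Writing out the Hankel transform, the kernel of the operator $g\mapsto \hank[e^{it\rho}g]$ evaluated at radius $r$ is
\begin{equation*}
K_{m,t}(r,s)=\int_0^\infty J_\nu(r\rho)J_\nu(s\rho)e^{it\rho}\chi_{[1,2]}(\rho)\,\rho\,d\rho,
\end{equation*}
so by Cauchy--Schwarz in $s$ (against $s\,ds$, exploiting that $s$ ranges over a compact set dictated by the support — more carefully, one uses a $TT^*$ / fractional integration argument rather than brute Cauchy--Schwarz) it is enough to control $\sup_r\int |K_{m,t}(r,s)|\,\cdots$; the cleanest route is the standard $TT^*$ reduction: \eqref{4.3} follows from the decay estimate
\begin{equation*}
\Big\|\int_0^\infty J_\nu(r\rho)J_\nu(s\rho)e^{i(t-t')\rho}\chi^2(\rho)\,\rho\,d\rho\Big\|_{L^\infty_{r}L^1_{sds}\ \text{or}\ L^\infty_{r,s}}\lesssim (1+|t-t'|)^{-\gamma}
\end{equation*}
for a suitable $\gamma>1/p'$, uniformly in $\nu$, together with the trivial $L^2$ bound, interpolated and fed into the Hardy--Littlewood--Sobolev inequality in $t$. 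This is exactly the scheme of \cite{miao,miao1}.

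The key step, and the main obstacle, is proving the uniform-in-$\nu$ decay of the oscillatory Bessel integral
\begin{equation*}
I_\nu(r,s,\tau)=\int_0^\infty J_\nu(r\rho)J_\nu(s\rho)e^{i\tau\rho}\chi^2(\rho)\,\rho\,d\rho,
\qquad \tau=t-t'.
\end{equation*}
Here is where Proposition \ref{lem:Bessel} enters: I would split each Bessel factor according to whether its argument $r\rho$ (resp. $s\rho$) is small or large compared to $\nu$. In the regime where an argument is $\lesssim\nu$, estimate \eqref{2.4} gives the pointwise bound $|J_\nu(r\rho)|\lesssim (r\rho/\nu)^{\text{something}}$ decaying in $\nu$, which kills that contribution uniformly (and in fact forces $r,s\sim\nu$ or smaller to matter). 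In the regime $r\rho,s\rho\gg\nu\gg1$ one inserts the decomposition $J_\nu=J_{\nu,1}+J_{\nu,2}+E_\nu$: the $E_\nu$ and $J_{\nu,2}$ pieces are $O((r\rho)^{-1/2})$ with the same bound on their $\rho$-derivatives by \eqref{est:E'}–\eqref{est:J2}, so they are handled by a non-stationary-phase / integration-by-parts argument in $\rho$ (the phase $\tau\rho$ has derivative $\tau$, and the amplitude is $\chi^2(\rho)\rho\cdot(\text{slowly varying})$, giving $O(|\tau|^{-N})$ up to the slow-varying loss, hence plenty of decay); the genuinely oscillatory piece is $J_{\nu,1}(r\rho)J_{\nu,1}(s\rho)$, whose phase is $r\rho\sin\theta_1+s\rho\sin\theta_2-\nu(\theta_1+\theta_2)+\tau\rho$ on the small angular region $|\theta_i|\lesssim\delta$, and here one applies stationary phase / Van der Corput in $(\rho,\theta_1,\theta_2)$ exactly as in \cite{miao}. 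The output is a bound of the form $|I_\nu|\lesssim (rs)^{-1/2}(1+|\tau|)^{-1/2}$ plus rapidly decaying error terms, uniform in $\nu$.

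Once the kernel decay is in hand, the endgame is routine but worth stating: the $L^2_t\to L^2_t$ bound is immediate from Lemma \ref{lem:hankel}(3) (the flow is unitary, frequency truncation is bounded), and the kernel estimate gives $L^1_t\to L^\infty_t$ type control of $TT^*$ at fixed radii, so real interpolation produces $L^{p'}_t\to L^p_t$ boundedness of $TT^*$ with values in $L^\infty_{dr}$ for $p>2$; by the $TT^*$ lemma this is equivalent to \eqref{4.3}. The role of the hypothesis $\varepsilon=\mathrm{dist}(\alpha,\Z)>0$ is precisely to guarantee $\nu=|m+\alpha|$ is bounded below by $\varepsilon$ for every $m$, so that the small-argument estimate \eqref{2.4} never degenerates (the $m=0$, $\alpha\in\Z$ case, i.e.\ $\nu=0$, being the only genuinely different one and excluded here); one must also check that the constants coming from \eqref{2.4}, the $\delta$-dependent constants in Proposition \ref{lem:Bessel}, and the stationary-phase constants can all be taken independent of $m$, which they can since they depend only on $\nu$ through quantities already shown to be uniformly controlled. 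I expect the stationary-phase analysis of the $J_{\nu,1}J_{\nu,1}$ term, keeping track of uniformity in $\nu$ and of the interplay between the three phase variables, to be the technically heaviest part of the argument.
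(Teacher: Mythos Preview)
Your $TT^*$ route has a real gap that prevents it from reaching the full range $p>2$. First, the claimed kernel bound $|I_\nu(r,s,\tau)|\lesssim (rs)^{-1/2}(1+|\tau|)^{-1/2}$ is false: using $J_\nu(x)\sim x^{-1/2}e^{\pm ix}$ for large $x$, the integral $I_\nu$ is (up to lower-order terms) $(rs)^{-1/2}\sum_{\pm,\pm}\widehat a(\pm r\pm s+\tau)$ with $a$ smooth and compactly supported; on the resonance set $\pm r\pm s+\tau=0$ one only gets $|I_\nu|\sim(rs)^{-1/2}$ with \emph{no} extra decay in $\tau$. Taking e.g.\ $r\sim|\tau|$ and $s\sim1$ on this set gives $\sup_{r,s}|I_\nu(r,s,\tau)|\sim|\tau|^{-1/2}$, and this is sharp (already for $\nu=\varepsilon$). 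Second, feeding the correct decay $|\tau|^{-1/2}$ into $TT^*$ plus Hardy--Littlewood--Sobolev yields $TT^*:L^{p'}_tL^1_{dr}\to L^p_tL^\infty_{dr}$ only at $p=4$; interpolating with the energy bound then recovers precisely the classical wave-admissible line $\tfrac2p+\tfrac1q=\tfrac12$, and \emph{not} the larger range $p>2$, $q=\infty$ that Proposition~\ref{proploc} requires. The paper in fact remarks explicitly that one cannot proceed here as in \cite{miao} by interpolating with pre-existing Strichartz estimates.

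The paper's proof is genuinely different and avoids $TT^*$ altogether. It localizes the spatial variable to dyadic shells $r\in[R/2,R]$ and establishes (Proposition~\ref{LRE}) the bound $\big\|\hank[e^{it\rho}\varphi g_m]\big\|_{L^p_tL^\infty_{dr}([R/2,R])}\lesssim Q(R)\|g_m\|_{L^2}$ with $Q(R)=R^{\varepsilon/2}$ for $R\lesssim1$ and $Q(R)=R^{1/p-1/2}$ for $R\gg1$; summing over $R\in2^{\Z}$ converges exactly when $p>2$. On each shell the estimate is proved \emph{directly}: one applies Hausdorff--Young in $t$ (turning $L^p_t$ into $L^{p'}_\rho$ on the compact frequency support), combines it with pointwise Bessel bounds and Sobolev embedding in $r$, and for the main oscillatory piece $J_{\nu,1}$ at large $R$ expands $g_m$ in a Fourier series on $[0,4]$ and shows that the resulting functions $\psi^\nu_{t_j}(r)$ are essentially supported on $|t_j|\lesssim R$ with size $O(r^{-1})$ there. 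This space--time localization is what produces the gain $R^{1/p-1/2}$ for \emph{every} $p>2$, and it is invisible to a pure dispersive/$TT^*$ argument.
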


\begin{proof}
The proof of this result heavily relies on the following technical result:

\begin{proposition}\label{LRE}

Let $\varphi\in\mathcal{C}_c^\infty(\R)$ be supported in $I:=[1,2]$, $R>0$ be a dyadic number, $\varepsilon$ be in \eqref{sma-eig} and $\nu=\nu(m)=|m+\alpha|$ with $m\in\Z$. Then
\begin{equation}\label{stri-L}
\Big\|\Big(\sum_{m\in\Z}\Big|\mathcal{H}_{\nu}\big[e^{ it\rho}\varphi(\rho) g_m(\rho)\big](r)\Big|^2\Big)^{1/2}\Big\|_{L^p_tL^\infty_{dr}([R/2,R])}  
\end{equation}
\begin{equation}
\lesssim \Big\|\Big(\displaystyle\sum_{m\in\Z}|g_m(\rho)|^2\Big)^{1/2}\varphi(\rho)\Big\|_{L^2_{\rho d\rho}(I)}\times
\begin{cases}
R^{\frac{\varepsilon}2}
\qquad\quad R\lesssim1\\
R^{\frac1p-\frac12}
\qquad R\gg1.
\end{cases}
\end{equation}
\end{proposition}

Let us postpone for a moment the proof of Proposition \ref{LRE} and deduce from this the one of Proposition \ref{proploc}. We thus need to prove that 
\begin{equation}\label{4.10}
\begin{split}
\Big\|\Big(\sum_{m\in\Z}\big|\hank\big[e^{it\rho}g_m(\rho)\big](r)\big|^2\Big)^{\frac12}
\Big\|_{L^p_t(\R;L^\infty_{dr}(\R^+))}\leq
C\|u_0\|_{L^2_x}.
\end{split}
\end{equation}
Using the dyadic decomposition and the fact that $\ell^{2}\hookrightarrow \ell^{\infty}$,
 we can write
\begin{equation}\label{4.11}
\begin{split}
&\Big\|\Big(\sum_{m\in\Z}\big|\hank\big[e^{it\rho}g_m(\rho)\big](r)\big|^2\Big)^{\frac12}
\Big\|^2_{L^p_t(\R;L^\infty_{dr}(\R^+))}\\&\lesssim 
\Big\|\sup_{R\in2^{\Z}}\Big\|\Big(\sum_{m\in\Z}\big|\hank\big[e^{it\rho}g_m(\rho)\big](r)\big|^2\Big)^{\frac12}\Big\|_{L^\infty_{dr}([R/2,R])}\Big\|^2_{L^p_t(\R)}
\\&\lesssim
\Big\|\Big(\sum_{R\in2^{\Z}}\Big\|\Big(\sum_{m\in\Z}\big|\hank\big[e^{it\rho}g_m(\rho)\big](r)\big|^2\Big)^{\frac12}\Big\|^2_{L^\infty_{dr}([R/2,R])}\Big)^{1/2}\Big\|^2_{L^p_t(\R)}
\\&\lesssim
\sum_{R\in2^{\Z}}\sum_{m\in\Z}\Big\|\hank\big[e^{it\rho}g_m(\rho)\big](r)\Big\|^2_{L^p_t(\R;L^\infty_{dr}([R,2R]))}.
\end{split}
\end{equation}
 Thanks to Proposition \ref{LRE} we can estimate further with (notice that as $g_m$ is localized in $[1,2]$ the weight $\rho$ in the measure plays no role)
\begin{eqnarray}\label{cite}
\eqref{4.11}&\lesssim& \sum_{R\in2^{\Z}}\sum_{m\in\Z} \min\{R^{\frac1p-\frac12},R^{\frac{\varepsilon}2}\}^2
\|g_m(\rho)\|^2_{L^2_{\rho d\rho}}.
 \end{eqnarray}
As we are taking $p>2$, the summation in $R$ above turns to be convergent.
Therefore we have obtained
\begin{eqnarray*}
\eqref{4.11}&\lesssim&
\sum_{m\in\Z}
\|g_m(\rho)\|^2_{L^2_{\rho d\rho}}.
\end{eqnarray*}
Recalling the standard properties of the Hankel transform collected in Lemma \ref{lem:hankel}, we can eventually write
\begin{equation*}
\sum_{m\in\Z}
\|g_m(\rho)\|^2_{L^2_{\rho d\rho}}=
\sum_{m\in\Z}
\|\hank \kappa_m(\rho)\|^2_{L^2_{\rho d\rho}}=
\sum_{m\in\Z}
\|\kappa_m(r)\|^2_{L^2_{r d r}}=\|u_0\|^2_{L^2}
\end{equation*}
and this concludes the proof of Proposition \ref{proploc}.
\end{proof}

Let us now deduce the proof of \eqref{stri-w} from Proposition \ref{proploc}. Let $R$ and $N$ be dyadic numbers (i.e. let $R$ and $N$ be in $2^{\Z}$); by making a dyadic decomposition,  we can write, starting from \eqref{indata}-\eqref{stim1},
\begin{eqnarray*}
\|u(t,x)\|_{L^p_tL^\infty_{dr}\,L^2_\theta}^2&\leq&\sum_{m\in\mathbb Z}\left\|\sum_{N\in 2^\Z}\hank\left[e^{it{\rho}} \varphi(\frac{\rho}N)\hank \kappa_m(\rho) \right]\right\|_{L^p_tL^\infty_{dr}(\R^+)\,}^2
\\
&\leq&
\sum_{m\in\mathbb Z}\left\|\sup_{R\in2^{\Z}} \left\|\sum_{N\in 2^\Z} \hank\left[e^{it{\rho}} \varphi(\frac{\rho}N)\hank \kappa_m(\rho) \right]\right\|_{L^\infty_{dr}([R,2R])}\right\|_{L^p_t}^2.
\end{eqnarray*}
By using the fact that $\ell^2 \hookrightarrow\ell^\infty$ and the Minkowski  inequality, we further obtain 
\begin{eqnarray*}
\|u(t,x)\|_{L^p_tL^\infty_{dr}\,L^2_\theta}^2
&\leq&
\sum_{m\in\mathbb Z}\left\| \left(\sum_{R\in2^{\Z}}\left\|\sum_{N\in 2^\Z} \hank\left[e^{it{\rho}} \varphi(\frac{\rho}N)\hank \kappa_m(\rho) \right]\right\|^2_{L^\infty_{dr}\,([R,2R])}\right)^{1/2}\right\|_{L^p_t}^2
\\
&\leq&
 \sum_{m\in\mathbb Z} \sum_{R\in2^{\Z}}\left\|\sum_{N\in 2^\Z} \hank\left[e^{it{\rho}} \varphi(\frac{\rho}N)\hank \kappa_m(\rho) \right]\right\|^2_{L^p_tL^\infty_{dr}\,([R,2R])}
\\
&\leq&
\sum_{m\in\mathbb Z} \sum_{R\in2^{\Z}}\left(\sum_{N\in 2^\Z}\left\| \hank\left[e^{it{\rho}} \varphi(\frac{\rho}N)\hank \kappa_m(\rho) \right]\right\|_{L^p_tL^\infty_{dr}\,([R,2R])}\right)^2
\end{eqnarray*}
Notice that in the last inequality we have used the triangle inequality instead of Littlewood-Paley square function inequality, which fails at $L^\infty_{dr}$. 

By using a scaling argument, we finally get 
\begin{eqnarray*}
&=&
 \sum_{m\in\mathbb Z} \sum_{R\in2^{\Z}}\left(\sum_{N\in 2^\Z}N^{2-\frac1p}\left\| \hank\left[e^{it{\rho}} \varphi(\rho)\hank \kappa_m(N\rho) \right]\right\|_{L^p_tL^\infty_{dr}\,([NR,2NR])}\right)^2
\\&\leq&
 \sum_{m\in\mathbb Z} \sum_{R\in2^{\Z}}\left(\sum_{N\in 2^\Z}N^{1-\frac1p}Q(NR)\left\| \varphi(\frac\rho{N})\hank \kappa_m(\rho) \right\|_{L^2_{\rho d\rho}}
\right)^2,
\end{eqnarray*}
(we have also used Proposition \ref{LRE}), where
\begin{equation}
Q(NR)=
\begin{cases}(NR)^{\frac{\varepsilon}2},\qquad NR\lesssim 1\\ (NR)^{\frac1p-\frac12},\quad NR\gg1.
\end{cases}
\end{equation}
Due to the fact that we are taking $p>2$, we have
\begin{equation}\label{q-cond}
\frac1p-\frac12<0.
\end{equation}
Then, as $\varepsilon>0$, we see that
\begin{equation}\label{ST}
\sup_{R} \sum_{N\in2^\Z} Q(NR) <\infty,\quad \sup_{N} \sum_{R\in2^\Z} Q(NR) <\infty.
\end{equation}
Let 
\begin{equation}
A_{N,m}=N^{1 -\frac 1p}
\|(\hank f)(\rho) \varphi(\rho/N)\|_{L^2_{\rho d\rho}(\R^+)},
\end{equation} 
we use the Schur test lemma argument with \eqref{ST} in the following way:
\begin{equation}
\begin{split}
& \left(\sum_{R\in2^\Z}  \Big(\sum_{N\in2^\Z} Q(NR) A_{N,m} \Big)^2\right)^{1/2}\\
 &=\sup_{\|B_R\|_{\ell^2}\leq 1}\sum_{R\in2^\Z}  \sum_{N\in2^\Z} Q(NR)A_{N,m} B_R\\&\leq C  \left(\sum_{R\in2^\Z}\sum_{N\in2^\Z} Q(NR)|A_{N,m}|^2\right)^{1/2} \left(\sum_{R\in2^\Z}\sum_{N\in2^\Z} Q(NR)|B_R|^2\right)^{1/2}
 \\&\leq C\big( \sup_{R} \sum_{N\in2^\Z} Q(NR) \sup_{N} \sum_{R\in2^\Z} Q(NR)\big)^{1/2} \left(\sum_{N\in2^\Z}|A_{N,m}|^2\right)^{1/2} \left(\sum_{R\in2^\Z}|B_R|^2\right)^{1/2}
 \\&\leq C\left(\sum_{N\in2^\Z}|A_{N,m}|^2\right)^{1/2} .
\end{split}
\end{equation}
We have thus obtained
\begin{eqnarray*}
\|u(t,x)\|_{L^p_tL^\infty_{dr}\,L^2_\theta}^2 &\leq& \sum_{m\in\Z}\sum_{R\in2^\Z}  \Big(\sum_{N\in2^\Z} Q(NR) A_{N,m} \Big)^2
\\
&\leq& C\sum_{m\in\Z}\sum_{N\in2^\Z}|A_{N,m}|^2
\\
&=&\|u_0\|_{\dot H^{1-\frac1p}_A}^2 .
\end{eqnarray*}

And this concludes the proof. 
\medskip

\begin{remark}\label{rkrange}
As it is seen, condition $p>2$ is necessary in order to ensure convergence of the series on the right hand side of \eqref{cite}, as it is for \eqref{ST}. 
Then, by interpolation, one obtains the full range \eqref{pqrange}.
\end{remark}

\begin{remark}\label{rkdirac}
 The proof for \eqref{stri-D}, given Proposition \ref{LRE}, follows the same punchline, with minor necessary algebraic modifications. Indeed, the spectral projection in this case as introduced in definition \ref{hanktras} is $2$-dimensional, and involves the Hankel transform of two different orders. Nevertheless, with slight additional care due to the $2$-dimensional projection introduced in definition \ref{hanktras}, the proof works in the exact same way. We omit the details.
\end{remark}


To conclude with, we thus only need to provide a proof for Proposition \ref{LRE}.
\begin{proof}[Proof of Proposition \ref{LRE}]
 To prove this result, we divide into two cases $R\lesssim 1$ and $R\gg1$.
For $R\lesssim 1$, it suffices to prove
\begin{align*}
&\Big\|\Big(\sum_{m\in\Z}\Big|\int_0^\infty e^{\pm it\rho}
J_{\nu}(r\rho)g_m(\rho)\varphi(\rho)d\rho\Big|^2\Big)^{1/2}\Big\|
_{L^p_tL^{\infty}([R/2,R])} \nonumber \\
\lesssim& R^{\frac\varepsilon 2}
\Big\|\Big(\sum_{m\in\Z}|g_m(\rho) \varphi(\rho)|^2\Big)^{1/2}\Big\|_{L^2_{\rho d\rho}(I)}.
\end{align*}
Taking $\varepsilon$ in \eqref{sma-eig} (notice that $0<\varepsilon\leq 1/2$ by definition), by the Sobolev embedding $H^{\frac{1+\varepsilon}2}(\Omega)\hookrightarrow
L^\infty_{dr}(\Omega)$ with $\Omega=[R/2,R]$ and the interpolation, we have 
\begin{align*}
&\Big\|\Big(\sum_{m\in\Z}\Big|\int_0^\infty e^{\pm it\rho}
J_{\nu}(r\rho)g_m(\rho)\varphi(\rho)d\rho\Big|^2\Big)^{1/2}\Big\|
_{L^p_tL^{\infty}([R/2,R])} \nonumber \\
\lesssim& \Big\|\Big(\sum_{m\in\Z}\Big|\int_0^\infty e^{\pm it\rho}
J_{\nu}(r\rho)g_m(\rho)\varphi(\rho)d\rho\Big|^2\Big)^{1/2}\Big\|
_{L^p_t H^{\frac{1+\varepsilon}2}([R/2,R])} \nonumber \\
\lesssim& \Big\|\Big(\sum_{m\in\Z}\Big|\int_0^\infty e^{\pm it\rho}
J_{\nu}(r\rho)g_m(\rho)\varphi(\rho)d\rho\Big|^2\Big)^{1/2}\Big\|^{\frac{1-\varepsilon}2}
_{L^p_t L^{2}([R/2,R])} \\ & \times \Big\|\Big(\sum_{m\in\Z}\Big|\int_0^\infty e^{\pm it\rho}
J_{\nu}(r\rho)g_m(\rho)\varphi(\rho)d\rho\Big|^2\Big)^{1/2}\Big\|^{\frac{1+\varepsilon}2}
_{L^p_t H^{1}([R/2,R])} \nonumber \\
\lesssim& R^{\frac\varepsilon 2}
\Big\|\Big(\sum_{m\in\Z}|g_m(\rho) \varphi(\rho)|^2\Big)^{1/2}\Big\|_{L^2_{\rho d\rho}(I)}\nonumber 
\end{align*}
\emph{provided} we can prove the following estimates:

\begin{align}\label{Jest}
&\Big\|\Big(\sum_{m\in\Z}\Big|\int_0^\infty e^{\pm it\rho}
J_{\nu}(r\rho)g_m(\rho)\varphi(\rho)d\rho\Big|^2\Big)^{1/2}\Big\|
_{L^p_tL^2_{dr}([R/2,R])} \nonumber \\
\lesssim& R^{\frac12+\varepsilon}
\Big\|\Big(\sum_{m\in\Z}|g_m(\rho) \varphi(\rho)|^2\Big)^{1/2}\Big\|_{L^2_{\rho d\rho}(I)}.
\end{align}
and
\begin{align}\label{Jest'}
&\Big\|\Big(\sum_{m\in\Z}\Big|\int_0^\infty e^{\pm it\rho}
J'_{\nu}(r\rho)g_m(\rho)\varphi(\rho)\rho d\rho\Big|^2\Big)^{1/2}\Big\|
_{L^p_tL^2_{dr}([R/2,R])} \nonumber \\
\lesssim& R^{{-\frac12}+\varepsilon}
\Big\|\Big(\sum_{m\in\Z}|g_m(\rho) \varphi(\rho)|^2\Big)^{1/2}\Big\|_{L^2_{\rho d\rho}(I)}.
\end{align}

To prove \eqref{Jest}, since $p>2,$ we use the Minkowski inequality and the Hausdorff-Young inequality in $t$ variable to obtain
\begin{align*}
&\Big\|\Big(\sum_{m\in\Z}\Big|\int_0^\infty e^{\pm it\rho}
J_{\nu}(r\rho)g_m(\rho)\varphi(\rho)d\rho\Big|^2\Big)^{1/2}\Big\|
_{L^p_tL^2_{dr}([R/2,R])} \nonumber \\
\lesssim&
\Big\|\Big(\sum_{m\in\Z}\Big\|J_{\nu}(r\rho)
g_m(\rho)\varphi(\rho)\Big\|^2_{L^{p'}_{d\rho}(I)}\Big)^{1/2}\Big\|_{L^2_{dr}([R/2,R])}.
\end{align*}
Recalling \eqref{2.4} and using Stirling's formula $\Gamma(\nu+1)\sim\sqrt{\nu}(\nu/e)^\nu$, we obtain
\begin{align*}
&\Big\|\Big(\sum_{m\in\Z}\Big\|J_{\nu}(r\rho)
g_m(\rho)\varphi(\rho)\Big\|^2_{L^{p'}_{d\rho}(I)}\Big)^{1/2}\Big\|_{L^2_{dr}([R/2,R])} \nonumber \\
\lesssim&
R^{\frac12+\varepsilon}\Big\|\Big(\sum_{m\in\Z}|g_m(\rho)|^2\Big)^{1/2}
\varphi(\rho)\Big\|_{L^{p'}_{\rho d\rho}(I)},
\end{align*}
where we have used again Minkowski's inequality and the fact that $\rho\in I=[1,2]$. To deal with \eqref{Jest'}, we follow the same argument evoking this time \eqref{2.5}: this yields \eqref{stri-L} when $R\lesssim 1$ (we omit the details). 

\vspace{0.2cm}

Next we consider the case $R\gg1$: it is going to be enough to prove
\begin{align}\label{R>1}
&\Big\|\Big(\sum_{m\in\Z}\Big|\int_0^\infty J_{\nu}(r\rho)e^{-it\rho}g_m(\rho)\varphi(\rho)d\rho
\Big|^2\Big)^{1/2}\Big\|_{L^p_tL^\infty_{dr}([R/2,R])} \nonumber \\
\lesssim&
R^{\frac{1}p-\frac12}\Big\|\Big(\sum_{m\in\Z}|g_m(\rho)|^2\Big)^{1/2}\varphi(\rho)\Big\|_{L^2_{\rho d\rho}(I)}.
\end{align}

We need the following
\begin{lemma}
Assume 
\begin{equation}\label{H:Q}
|Q_{\nu}(r)|\lesssim C r^{-1/2},\qquad r\gg1.
\end{equation}
Then for $R\gg1$,
\begin{align}\label{est:Q}
&\Big\|\Big(\sum_{m\in\Z}\Big|\int_0^\infty Q_{\nu}(r\rho)e^{-it\rho}g_m(\rho)\varphi(\rho)d\rho
\Big|^2\Big)^{1/2}\Big\|_{L^p_tL^p_{dr}([R/2,R])} \nonumber \\
\lesssim&
R^{\frac{1}p-\frac12}\Big\|\Big(\sum_{m\in\Z}|g_m(\rho)|^2\Big)^{1/2}\varphi(\rho)\Big\|_{L^{p'}_{d\rho}(I)}.
\end{align}

\end{lemma}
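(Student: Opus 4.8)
The plan is to treat the left-hand side of \eqref{est:Q} as a dual pairing and estimate the relevant operator, using the decay hypothesis \eqref{H:Q} together with the fixed support $I=[1,2]$ of $\varphi$. First I would replace the $\ell^2_m$ sum by a single function $g(\rho)=\big(\sum_{m}|g_m(\rho)|^2\big)^{1/2}$; since the kernel $Q_\nu(r\rho)e^{-it\rho}\varphi(\rho)$ acts diagonally in $m$ up to the $\nu=\nu(m)$ dependence, and the bound \eqref{H:Q} is uniform in $\nu$ for $r\rho\gg1$, Minkowski's inequality in $\ell^2_m$ reduces matters to the scalar estimate
\begin{equation}\label{scalar-red}
\Big\| \int_0^\infty Q_\nu(r\rho) e^{-it\rho} h(\rho)\,d\rho \Big\|_{L^p_tL^p_{dr}([R/2,R])} \lesssim R^{\frac1p-\frac12}\|h\|_{L^{p'}_{d\rho}(I)}
\end{equation}
with a constant independent of $\nu$, applied to $h=g\varphi$. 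Note that on the region of integration $r\in[R/2,R]$ with $R\gg1$ and $\rho\in[1,2]$ we have $r\rho\sim R\gg1$, so \eqref{H:Q} gives the pointwise bound $|Q_\nu(r\rho)|\lesssim R^{-1/2}$ throughout.

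Next I would prove \eqref{scalar-red} by a $TT^*$ / Hausdorff--Young argument in the $t$ variable, exactly parallel to the treatment of \eqref{Jest}. Since $p>2$, applying Minkowski's inequality to pull the $L^p_tL^p_{dr}$ norm inside the $\rho$-integral and then the Hausdorff--Young inequality in $t$ (which turns $e^{-it\rho}$ and the $L^p_t$ norm into an $L^{p'}_{d\rho}$ norm) yields
\begin{equation*}
\Big\| \int_0^\infty Q_\nu(r\rho) e^{-it\rho} h(\rho)\,d\rho \Big\|_{L^p_tL^p_{dr}([R/2,R])} \lesssim \Big\| \| Q_\nu(r\rho) h(\rho)\|_{L^{p'}_{d\rho}(I)} \Big\|_{L^p_{dr}([R/2,R])}.
\end{equation*}
Inserting $|Q_\nu(r\rho)|\lesssim R^{-1/2}$ and using that $I$ has finite measure and $r\in[R/2,R]$ has measure $\sim R$, the $L^p_{dr}$ integration contributes a factor $R^{1/p}$ while the kernel contributes $R^{-1/2}$; pulling the constant out of the $\ell^2_m$ reconstruction then gives the claimed $R^{\frac1p-\frac12}$ with the norm $\big\|\big(\sum_m|g_m|^2\big)^{1/2}\varphi\big\|_{L^{p'}_{d\rho}(I)}$ on the right. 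One should double-check the order of Minkowski's inequality between the $\ell^2_m$ sum and the $L^{p'}_{d\rho}$ norm: since $p'<2$, Minkowski goes in the favorable direction, $\|\cdot\|_{\ell^2_m L^{p'}_\rho}\leq \|\cdot\|_{L^{p'}_\rho\ell^2_m}$, which is precisely what is needed.

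The main obstacle, as usual in this circle of ideas, is bookkeeping the $\nu$-uniformity rather than any deep analytic difficulty: the whole point of hypothesis \eqref{H:Q} is that it holds with a constant independent of $m$, and one must make sure the Hausdorff--Young and Minkowski steps do not secretly reintroduce $\nu$-dependence — they do not, since they only see the support $I$ and the region $[R/2,R]$. A secondary subtlety is that \eqref{H:Q} is only assumed for $r\gg1$, so strictly one needs $R$ large enough that $r\rho\geq R/2\gg1$ for all $\rho\in I$; this is guaranteed precisely by the standing assumption $R\gg1$ in the statement. Once \eqref{est:Q} is in hand, \eqref{R>1} follows by applying it with $Q_\nu=J_\nu$ (whose large-argument decay $|J_\nu(r)|\lesssim r^{-1/2}$ is \eqref{lem2.2}-type and satisfies \eqref{H:Q}), combined with the Sobolev embedding $L^p_{dr}\hookleftarrow$ and the derivative bound for $J'_\nu$ to pass from $L^p_{dr}$ to $L^\infty_{dr}$ on the dyadic annulus, exactly as in the $R\lesssim1$ case but with the roles of the exponents adjusted; I would carry out that last interpolation step in the lines following this lemma.
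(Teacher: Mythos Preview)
Your proof of the lemma itself is correct and follows exactly the paper's approach: Minkowski to swap $\ell^2_m$ with $L^p_tL^p_{dr}$ (using $p\geq2$), Hausdorff--Young in $t$ to pass to $L^{p'}_\rho$, the pointwise bound $|Q_\nu(r\rho)|\lesssim R^{-1/2}$ on $[R/2,R]\times I$, and finally Minkowski in the direction $\ell^2_m L^{p'}_\rho\leq L^{p'}_\rho\ell^2_m$ (using $p'\leq2$); your explicit bookkeeping of these two Minkowski directions is a useful clarification of what the paper leaves implicit.

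One caveat about your closing paragraph on how the lemma feeds into \eqref{R>1}: the paper does \emph{not} apply the lemma with $Q_\nu=J_\nu$ directly, but first decomposes $J_\nu=J_{\nu,1}+J_{\nu,2}+E_\nu$ via Proposition~\ref{lem:Bessel}, applies the lemma (plus $W^{1,p}\hookrightarrow L^\infty$) only to the pieces $J_{\nu,2},J'_{\nu,2},E_\nu,E'_\nu$ for which \eqref{H:Q} holds uniformly in $\nu$, and treats $J_{\nu,1}$ by a separate stationary/nonstationary phase argument --- so your plan for the application step needs that splitting rather than a direct appeal to a uniform bound on $J_\nu$ and $J'_\nu$.
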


\begin{proof}
Since $p>2,$ we use the Minkowski inequality and the Hausdorff-Young one in the $t$ variable to obtain
\begin{align*}
&\Big\|\Big(\sum_{m\in\Z}\Big|\int_0^\infty e^{\pm it\rho}
Q_{\nu}(r\rho)g_m(\rho)\varphi(\rho)d\rho\Big|^2\Big)^{1/2}\Big\|
_{L^p_tL^p_{dr}([R/2,R])} \nonumber \\
\lesssim&
\Big\|\Big(\sum_{m\in\Z}\Big\|Q_{\nu}(r\rho)
g_m(\rho)\varphi(\rho)\Big\|^2_{L^{p'}_\rho(I)}\Big)^{1/2}\Big\|_{L^p_{dr}([R/2,R])}\\
\lesssim&
R^{\frac{1}p-\frac12}\Big\|\Big(\sum_{m\in\Z}|g_m(\rho)|^2\Big)^{1/2}\varphi(\rho)\Big\|_{L^{p'}_{d\rho}(I)}.
\end{align*}

\end{proof}

We are now in position to prove \eqref{R>1}. To this aim, using \eqref{dec}, we need to prove
\begin{align}\label{R1>1}
&\Big\|\Big(\sum_{m\in\Z}\Big|\int_0^\infty J_{\nu,1}(r\rho)e^{-it\rho}g_m(\rho)\varphi(\rho)d\rho
\Big|^2\Big)^{1/2}\Big\|_{L^p_tL^\infty_{dr}([R/2,R])} \nonumber \\
\lesssim&
R^{\frac{1}p-\frac12}\Big\|\Big(\sum_{m\in\Z}|g_m(\rho)|^2\Big)^{1/2}\varphi(\rho)\Big\|_{L^2_{d\rho}(I)}.
\end{align}
and 
\begin{align}\label{R2>1}
&\Big\|\Big(\sum_{m\in\Z}\Big|\int_0^\infty \big(J_{\nu,2}(r\rho)+E_{\nu}(r\rho)\big)e^{-it\rho}g_m(\rho)\varphi(\rho)d\rho
\Big|^2\Big)^{1/2}\Big\|_{L^p_tL^\infty_{dr}([R/2,R])} \nonumber \\
\lesssim&
R^{\frac{1}p-\frac12}\Big\|\Big(\sum_{m\in\Z}|g_m(\rho)|^2\Big)^{1/2}\varphi(\rho)\Big\|_{L^2_{d\rho}(I)}.
\end{align}
We prove \eqref{R2>1} first. By the Sobolev embedding $W^{1,p}(\Omega)\hookrightarrow
L^\infty_{dr}(\Omega)$ with $\Omega=[R/2,R]$, it suffices to show \eqref{est:Q} with $Q_\nu(r)=J_{\nu,2}(r), J'_{\nu,2}(r), E_{\nu}(r)$ and $E'_{\nu}(r)$. By Proposition \ref{lem:Bessel}, 
we have verified \eqref{H:Q} hence we can deduce \eqref{est:Q}.
As a consequence, we obtain \eqref{R2>1}.

We next prove \eqref{R1>1}. For our purpose, we write the Fourier series of $g_m(\rho)$ as
\begin{equation}\label{period}
g_m(\rho)=\sum_jg_{m}^je^{i\frac{\pi}{2}\rho j}, \quad
g_m^j=\frac{1}{4}\int_0^4g_{m}(\rho)e^{-i\frac{\pi}{2}\rho j}  d\rho,
\end{equation}
so that
\begin{equation}\label{orth}
\|g_m(\rho)\|^2_{L^2_{d\rho}(I)}=\sum_j|g_m^j|^2.
\end{equation}
Let $\chi_\delta(\theta)=\chi(\theta/\delta)$ and recall $\nu=\nu(m)=|m+\alpha|$, we write
\begin{equation} \label{R1>1'}
\begin{split}
&\int_0^\infty J_{\nu,1}(r\rho)e^{-it\rho}g_m(\rho)\varphi(\rho)d\rho\\
=&\frac{1}{2\pi}\int_0^\infty e^{-it\rho}\int_{-\pi}^{\pi}e^{ir\rho\sin\theta-i\nu\theta}
\chi_\delta(\theta)\sum_jg_m^je^{i\frac{\pi}{2}\rho j}\varphi(\rho)d\rho d\theta\\
\lesssim&\sum_jg_m^j\int_{\R^2}e^{2\pi i\rho(r\sin\theta-(t-\frac{j}{4}))}\varphi(\rho)d\rho e^{-i\nu\theta}\chi_\delta(\theta)d\theta.
\end{split}
\end{equation}
Let $t_j=t-\frac{j}{4}$, we write 
\begin{align}\label{psi}
\psi_{t_j}^\nu(r)
=&\int_{\R^2}e^{2\pi i\rho(r\sin\theta-{t_j})}\varphi(\rho)d\rho e^{-i\nu\theta}\chi_\delta(\theta)d\theta\nonumber\\
=&\int_{\R}\check{\varphi}(r\sin\theta-{t_j}) e^{-i\nu\theta}\chi_\delta(\theta)d\theta.
\end{align}
Since $\check{\varphi}$ is a Schwartz function, then for any $N>0$, we have 
\begin{equation}\label{4.27}
|\check{\varphi}(r\sin\theta-{t_j})|\leq C_N(1+|r\sin\theta-{t_j}|)^{-N}.
\end{equation}
We consider two cases to study the properties of function $\psi_{t_j}^\nu(r)$.

{\bf Case 1: $|{t_j}|\geq4R.$} Since $r\leq2R\leq|{t_j}|$ and $|\theta|\leq\delta,$
we have
\begin{equation}\label{4.28}
|r\sin\theta-{t_j}|\geq|{t_j}|-r|\sin\theta|\geq\frac{1}{100}|{t_j}|
\end{equation}
and thus
\begin{equation}\label{4.29}
|\psi_{t_j}^\nu(r)|\leq C_{\delta,N}(1+|{t_j}|)^{-2N}.
\end{equation}
Therefore we obtain
\begin{equation*}
\eqref{R1>1'}\leq C_{\delta,N}R^{-N}\Big\|\Big(\sum_{m\in\Z}\Big|\sum_{j:4R\leq|t-\frac{j}{4}|}
g_m^j\Big(1+\Big|t-\frac{j}{4}\Big|\Big)^{-N}
\Big|^2\Big)^{1/2}\Big\|_{L^p_t(\R;L^\infty_{dr}(R/2,R))}.
\end{equation*}
Applying Cauchy-Schwartz's inequality to the above and then choosing $N$ large enough, we have
\begin{equation}
\begin{split}
\eqref{R1>1'}\leq &C_{\delta,N}R^{-N}\Big\|\Big(\sum_{m\in\Z}\sum_j\frac{|g_m^j|^2}{(1+|t-\frac j4|)^{N}}\Big)^{1/2} \Big\|_{L^p_t}\\
&\lesssim R^{-N}\Big\|\Big(\sum_{m\in\Z}\Big|g_m(\rho)\Big|^2\Big)^{1/2}\varphi(\rho)
\Big\|_{L^2_{d\rho}(I)}.
\end{split}
\end{equation}

{\bf Case 2:  $|{t_j}|<4R.$} We get based on ~\eqref{psi}~ and ~\eqref{4.27}~
\begin{align*}
|\psi_{t_j}^\nu(r)|\leq\frac{C_N}{2\pi}&\Big(\int_{\{\theta: |\theta|<2\delta,|r\sin\theta-{t_j}|\leq1\}}d\theta
\\\nonumber&\quad+\int_{\{\theta: |\theta|<2\delta,|r\sin\theta-{t_j}|\geq1\}}
(1+|r\sin\theta-{t_j}|)^{-N}d\theta\Big).
\end{align*}
Making the change of variables $y=r\sin\theta-{t_j}$, we further have
\begin{align}\label{4.33}
|\psi_{t_j}^\nu(r)|\leq\frac{C_N}{2\pi r}\Big(\int_{\{y:|y|\leq1\}}\,dy
+\int_{\{y:|y|\geq1\}}(1+|y|)^{-N}\,dy\Big)\lesssim r^{-1}.
\end{align}

We define the set $A=\{j\in\mathbb{Z}:|t-\frac{j}{4}|<4R\}$ for fixed $t$ and $R$. Obviously, the cardinality of $A$ is $O(R)$. Then,  from \eqref{4.33} and \eqref{R1>1'}, we obtain
\begin{align*}
&\Big\|\Big(\sum_{m\in\Z}\Big|\sum_{j\in A}g^j_{m}\psi^{\nu}_{t_j}(r)\Big|^2\Big)^{1/2}\Big\|_
{L^p_tL^\infty_{dr}([R/2,R])}  \\
\leq& C_{\delta,N}R^{-\frac{1}{2}}\Big(\sum_{m\in\Z}\sum_j|g^j_{m}|^2 \big(\int_{|t-\frac{j}4|<4R} dt\big)^{2/p}\Big)^{1/2}\\
=&C_{\delta,N}R^{\frac1p-\frac{1}{2}}\Big(\sum_{m\in\Z}\|g_m(\rho)\|^2_{L^2_{d\rho}}\Big)^{1/2}\\
\lesssim& R^{\frac1p-\frac{1}{2}}\Big\|\Big(\sum_{m\in\Z}|g_m(\rho)|^2\Big)^{1/2}\Big\|_{L^2_{d\rho}(I)}
\end{align*}
and thus the proof is concluded.

\end{proof}

\section{Proof of Theorem \ref{teo3}}\label{teo3sec}
 The proof follows the same lines as the one of Proposition 6.1 in \cite{kgconic}; we report here the main steps for the sake of completeness.

As a matter of fact, the result is an immediate consequence of the following

\begin{proposition}\label{propkg}
Let $\varphi\in C^\infty_c(\R\backslash\{0\})$ such that $\varphi(x)\in[0,1]$, that $\supp(\varphi)\subset [1/2,2]$ and that
$\sum_{j\in\Z}\varphi(2^{-j}\lambda)=1$ for $\lambda>0$. Set $\varphi_0(\lambda):=\sum_{j\leq0}\varphi(2^{-j}\lambda)$, let $v$ be a solution of \eqref{eq:kg} and let $\varepsilon$
be in \eqref{sma-eig}. Then there exists a constant $C$ such that 
\begin{itemize}
\item
for $1\leq \beta<1+\varepsilon$, 
\begin{equation}\label{lfest}
\| |x|^{-\beta}\varphi_0(\sqrt{H_A+1})v\|_{L^2_tL^2_x}\leq C \big(\| v_0\|_{L^2}  +\| v_1\|_{L^2}  \big)
\end{equation}
\item
for $1/2< \beta<1+\varepsilon$, 
\begin{equation}\label{hfest}
\| |x|^{-\beta}(1-\varphi_0)(\sqrt{H_A+1})v\|_{L^2_tL^2_x}\leq C \big(\| H_A^{\frac{2\beta-1}4}v_0\|_{L^2}  +\| H_A^{\frac{2\beta-3}4}v_1\|_{L^2}  \big)
\end{equation}
\end{itemize}
\end{proposition}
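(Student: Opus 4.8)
The plan is the following. Theorem \ref{teo3} is an immediate consequence of Proposition \ref{propkg}: one splits $v=\varphi_0(\sqrt{H_A+1})v+(1-\varphi_0)(\sqrt{H_A+1})v$, applies \eqref{lfest} and \eqref{hfest} to the two pieces and adds up, using that $1+H_A\simeq1$ on the support of the multiplier $\varphi_0(\sqrt{\cdot+1})$ and that $1-\varphi_0(\sqrt{\cdot+1})$ vanishes to infinite order at $H_A=0$. So the whole task is \eqref{lfest}-\eqref{hfest}, which I would prove by the scheme already used for the fractional Schr\"odinger flow in \cite{cacfan2} and for Klein-Gordon on metric cones in \cite{kgconic}. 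Using Proposition \ref{propdec} one writes $v(t,x)=\sum_{m\in\Z}v_m(t,r)\phi_m(\theta)$ and applies the Hankel transform $\mathcal{H}_\nu$ with $\nu=|m+\alpha|$, which by Lemma \ref{lem:hankel} turns $H_{\alpha,m}$ into multiplication by $\rho^2$; by \eqref{solkg} this represents the $v_0$-part of $v_m$ as $\mathcal{H}_\nu\big[\cos(t\sqrt{\rho^2+1})\,\mathcal{H}_\nu\kappa_m(\rho)\big]$, with $\kappa_m$ the radial components of $v_0$, and the $v_1$-part the same way with an extra factor $(\rho^2+1)^{-1/2}$.

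Writing $\cos$ and $\sin$ via $e^{\pm it\sqrt{\rho^2+1}}$, inserting the cutoff $\chi(\rho)=\varphi_0(\sqrt{\rho^2+1})$ (for \eqref{lfest}) or $\chi(\rho)=(1-\varphi_0)(\sqrt{\rho^2+1})$ (for \eqref{hfest}), and using the $L^2(d\theta)$-orthonormality of $\{\phi_m\}$, everything reduces to estimating, for each $m$ and then summing over $m$,
\begin{equation*}
\int_{\R}\int_0^\infty r^{-2\beta}\,\Big|\int_0^\infty J_\nu(r\rho)\,e^{\pm it\sqrt{\rho^2+1}}\,\chi(\rho)\,h_m(\rho)\,\rho\,d\rho\Big|^2\,r\,dr\,dt,
\end{equation*}
with $h_m=\mathcal{H}_\nu\kappa_m$ (and $h_m=(\rho^2+1)^{-1/2}\mathcal{H}_\nu\kappa_m$ for the $v_1$ contribution).

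The heart of the matter is a direct computation: apply Plancherel in $t$ and change variables $\tau=\sqrt{\rho^2+1}$, a diffeomorphism of $\R^+$ onto $(1,\infty)$ with $d\tau=\rho(\rho^2+1)^{-1/2}\,d\rho$; this identifies the inner $\rho$-integral with a Fourier transform and reduces the above to
\begin{equation*}
\int_0^\infty \sqrt{\rho^2+1}\,\rho\,\chi(\rho)^2\,|h_m(\rho)|^2\,\Big(\int_0^\infty r^{1-2\beta}\,|J_\nu(r\rho)|^2\,dr\Big)\,d\rho .
\end{equation*}
By the scaling $u=r\rho$ and the classical Weber-Schafheitlin formula (see \cite{Watson}), $\int_0^\infty r^{1-2\beta}|J_\nu(r\rho)|^2\,dr=C_{\nu,\beta}\,\rho^{2\beta-2}$ with $C_{\nu,\beta}=\dfrac{\Gamma(2\beta-1)\,\Gamma(\nu-\beta+1)}{2^{2\beta-1}\,\Gamma(\beta)^2\,\Gamma(\nu+\beta)}$, finite exactly when $\tfrac12<\beta<\nu+1$; since $\nu=|m+\alpha|\ge\varepsilon$ by \eqref{sma-eig}, the hypothesis $\tfrac12<\beta<1+\varepsilon$ secures this for every $m$, and Stirling's formula (using $\beta>\tfrac12$) gives $\sup_{\nu\ge\varepsilon}C_{\nu,\beta}<\infty$. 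One is thus left with the scalar weighted quantity $C_{\nu,\beta}\int_0^\infty\sqrt{\rho^2+1}\,\rho^{2\beta-1}\,\chi(\rho)^2\,|h_m(\rho)|^2\,d\rho$.

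It only remains to absorb the $\rho$-weight into the appropriate Sobolev norm by an elementary pointwise bound, and here the two regimes diverge. For \eqref{lfest} the cutoff $\chi=\varphi_0(\sqrt{\rho^2+1})$ is supported in $\rho\lesssim1$, where $\sqrt{\rho^2+1}\simeq1$ and $\sqrt{\rho^2+1}\,\rho^{2\beta-1}\chi(\rho)^2\lesssim\rho$ precisely because $\beta\ge1$ (the only place this hypothesis enters); summing in $m$ and using that $\mathcal{H}_\nu$ is an $L^2(\rho\,d\rho)$-isometry (Lemma \ref{lem:hankel}) produces $\sum_m\int_0^\infty\rho\,|h_m(\rho)|^2\,d\rho=\|v_0\|_{L^2}^2$, while the $v_1$ contribution, carrying the bounded factor $(\rho^2+1)^{-1/2}$, gives $\|v_1\|_{L^2}^2$. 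For \eqref{hfest} the cutoff $\chi=(1-\varphi_0)(\sqrt{\rho^2+1})$ vanishes to infinite order as $\rho\to0$, which kills the otherwise singular factor $\rho^{-1}$, so $\sqrt{\rho^2+1}\,\rho^{-1}\chi(\rho)^2\lesssim1$ on all of $\R^+$ (no restriction on $\beta$ beyond the $\beta>\tfrac12$ already used for $C_{\nu,\beta}$), hence $\sqrt{\rho^2+1}\,\rho^{2\beta-1}\chi(\rho)^2\lesssim\rho^{2\beta}$, and summing matches $\|H_A^{\frac{2\beta-1}4}v_0\|_{L^2}^2$; for the $v_1$ term only $\rho\,(\rho^2+1)^{-1/2}\chi(\rho)^2\lesssim1$ is needed, which yields $\|H_A^{\frac{2\beta-3}4}v_1\|_{L^2}^2$. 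The only genuinely delicate points are the two just highlighted: the uniform-in-$m$ control of the Bessel integral $C_{\nu,\beta}$, which through $\nu\ge\varepsilon$ forces the constraint $\beta<1+\varepsilon$, and the behavior at the frequency origin $\rho=0$, which separates the admissible ranges $\beta\ge1$ (low frequencies) and $\beta>\tfrac12$ (high frequencies).
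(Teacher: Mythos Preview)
Your argument is correct and in fact cleaner than the paper's. Both proofs begin identically: decompose in spherical harmonics, pass to the Hankel side via Lemma~\ref{lem:hankel}, and apply Plancherel in $t$ with the change of variable $\tau=\sqrt{\rho^2+1}$, arriving at the double integral $\int_0^\infty r^{1-2\beta}\int_0^\infty |J_\nu(r\rho)|^2\,(\cdots)\,d\rho\,dr$. The divergence is in how the $r$-integral is handled. The paper performs a dyadic decomposition both in $r$ (scales $R\in2^{\Z}$) and in $\rho$ (scales $2^j$), then invokes Proposition~4.2 of \cite{zhazhe} to estimate the localized pieces $G_m(R,2^j)$ and sums the resulting geometric series. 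You instead evaluate the $r$-integral in closed form via the Weber--Schafheitlin identity $\int_0^\infty r^{1-2\beta}|J_\nu(r\rho)|^2\,dr=C_{\nu,\beta}\rho^{2\beta-2}$, check by Stirling that $\sup_{\nu\ge\varepsilon}C_{\nu,\beta}<\infty$, and are left with a single $\rho$-integral that is controlled by an elementary pointwise bound on the frequency weight. Your route avoids both the dyadic bookkeeping and the external reference, and makes the role of the two constraints $\beta<1+\varepsilon$ (finiteness and uniformity of $C_{\nu,\beta}$) and $\beta\ge1$ versus $\beta>\tfrac12$ (behaviour of the $\rho$-weight at the origin) completely transparent. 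The paper's dyadic scheme has the advantage of being more robust in settings where no exact Bessel integral is available, but in the present Aharonov--Bohm case your direct computation is both shorter and more self-contained.
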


\begin{remark}
Notice that these two estimates are quite natural as the solutions to the Klein-Gordon equation behave like the Schr\"odinger ones for low frequencies and like the wave ones for high frequencies.
\end{remark}

\begin{proof}({\em of Proposition \ref{propkg}})
We start with \eqref{lfest}: let us denote with 
$$f^l=\varphi_0(\sqrt{H_A+1})f,\qquad f^h=(1-\varphi_0(\sqrt{H_A+1}))f
$$
 (to recall that we are dealing with the low-frequency case), so that (recalling \eqref{solkg})
$$
v^l=\frac12\left(e^{it\sqrt{H_A+1}}+e^{-it\sqrt{H_A+1}}\right)v_0^l+\frac1{2i}\frac{\left(e^{it\sqrt{H_A+1}}-e^{-it\sqrt{H_A+1}}\right)}{\sqrt{H_A+1}}v_1^l.
$$
For brevity, we limit to study the contribution from $v_0^l$ as the other ones follow the same argument. We decompose the initial datum $v^l_0$ as
$$v_0^l(x)=\sum_{m\in\mathbb Z}\kappa^l_m(r)\phi_m(\theta),$$
and we denote with $\tilde{k}_m^l=\hank \kappa^l_m$ with $\nu=|m+\alpha|$. Using standard functional calculus and recalling Proposition \ref{propdec}, we can then write the following representation
\begin{equation*}
e^{it\sqrt{H_A+1}}v_0^l(r,\theta)=\sum_{m\in\mathbb{Z}}e^{im\theta}\int_0^\infty e^{it\sqrt{\rho^2+1}}J_{|m+\alpha|}(r\rho)\varphi_0(\rho) \tilde{k}_m^l(\rho)\rho d\rho
\end{equation*}

 Thanks to the $L^2$ unitarity of the angular term and relying on Plancherel, we can write
\begin{eqnarray}\label{Gline}
\nonumber
\| |x|^{-\beta} e^{it\sqrt{H_A+1}}v_0^l\|^2_{L^2_tL^2_x}&=&
\sum_{m\in\mathbb{Z}}\int_0^\infty\left[\int_0^\infty\left|J_{|m+\alpha|}(r\rho)\varphi_0(\rho) \tilde{k}_m^l(\rho)\rho\right|^2\frac{\sqrt{\rho^2+1}}\rho d\rho\right] r^{1-2\beta}dr
\\
&\leq&
\nonumber
\sum_{m\in\mathbb{Z}}\sum_{j\leq 0}\int_0^\infty\left[\int_0^\infty\left|J_{|m+\alpha|}(r\rho) \tilde{k}_m^l(\rho)\rho\right|^2 \varphi^2(2^{-j}\rho)\frac{d\rho}\rho\right] r^{1-2\beta}dr
\\
&\leq&
\sum_{m\in\mathbb{Z}}\sum_{j\leq 0}\sum_{R\in2^\Z}2^{2j\beta}R^{1-2\beta}G_{m}(R,2^j)
\end{eqnarray}
where
$$
G_{m}(R,2^j)=\int_R^{2R}\left(\int_0^\infty|J_{|m+\alpha|}(r\rho) \tilde{k}_m^l(2^j\rho)|^2\varphi^2(\rho)d\rho\right)dr
$$
(notice that in the last inequality we have rescaled the variables $2^{-j}\rho\rightarrow \rho$ and $2^jr\rightarrow r$).
We can now rely on Proposition 4.2 in \cite{zhazhe} to estimate the term $G_{m}(R,2^j)$ as follows:
\begin{equation}\label{Gest}
G_{m}(R,2^j)\lesssim
\begin{cases}
R^{2|m+\alpha|+1}2^{-2j}\|\tilde{k}_m^l(2^j\rho)\varphi(2^{-j}\rho)\sqrt{\rho}\|^2_{L^2_{d\rho}},\qquad 	R\lesssim 1,\\
2^{-2j}\|\tilde{k}_m^l(2^j\rho)\varphi(2^{-j}\rho)\sqrt{\rho}\|^2_{L^2_{d\rho}},\qquad\qquad \qquad	R\gg 1.
\end{cases}
\end{equation}
Therefore, we can estimate
\begin{eqnarray*}
\eqref{Gline}\leq\sum_{m\in\mathbb{Z}}\sum_{j\leq 0}2^{2j(\beta-1)}\left(\sum_{R\in2^\Z,R\lesssim 1}R^{2(1+|m+\alpha|-\beta)}+\sum_{R\in2^\Z,R\gg 1}R^{1-2\beta}\right)\| \tilde{k}_m^l(\rho)\varphi(2^{-j}\rho)\sqrt{\rho}\|_{L^2_{d\rho}}^2.
\end{eqnarray*}
Notice that the two series in $R$ converge if we assume $\frac12<\beta<1+|m+\alpha|$: we thus eventually get
\begin{eqnarray*}
\| |x|^{-\beta} e^{it\sqrt{H_A+1}}v_0^l\|^2_{L^2_tL^2_x}&\lesssim& \sum_{m\in\mathbb{Z}}\sum_{j\leq 0}2^{2j(\beta-1)}\| \tilde{k}_m^l(\rho)\varphi(2^{-j}\rho)\sqrt{\rho}\|_{L^2}^2
 \\
 &\lesssim&\sum_{m\in\mathbb{Z}}\| \hank \kappa^l_m(\rho)\sqrt{\rho}\|_{L^2_{d\rho}}^2
 \\
 &\lesssim&
 \| v_0\|_{L^2}
\end{eqnarray*}
provided we further assume $\beta\geq1$ (notice that we have used the fact that the Hankel transform is an isometry on $L^2$). The term $v_1^l$ can be dealt with in the exact same way (notice that for low frequencies the factor $(1+H_A)^{-1/2}$ does not give any contribution), and this concludes the proof of \eqref{lfest}.

The proof of \eqref{hfest} follows the same line: with analogous calculations we get to the estimate
\begin{eqnarray*}
\| |x|^{-\beta} e^{it\sqrt{H_A+1}}v_0^h\|^2_{L^2_tL^2_x}&\lesssim&
 \sum_{m\in\mathbb{Z}}\sum_{j\leq 0}2^{2j(\beta-\frac12)}\| \hank \kappa^l_m(\rho)\varphi(2^{-j}\rho)\sqrt{\rho}\|_{L^2_{d\rho}}^2
\\
 &\lesssim&
 \| H_A^{\frac{2\beta-1}4}v_0\|_{L^2}.
\end{eqnarray*}
Noticing that the term $(1+H_A)^{-1/2} $ contributes with a factor $2^{-j}$ in estimate above, this concludes the proof of \eqref{hfest} and thus of Proposition \ref{propkg}.
\end{proof}

\end{document}